\DeclareMathAlphabet\mathbfcal{OMS}{cmsy}{b}{n}
\newcommand{\ten}[1]{\mathbfcal{#1}} 
\newcommand{\mat}[1]{\mathbf{#1}}
\newcommand{\tmat}[1]{\mathbf{\tilde{#1}}}
\newcommand{\gmat}[1]{\boldsymbol{#1}}
\newcommand{\re}{\mathbb{R}}
\newcommand{\cA}{\ten{A}}
\newcommand{\cI}{\mathcal{I}}
\newcommand{\bE}{\mathbb{E}}
\def\rank{\mbox{rank}}
\newcommand{\sign}{\text{sign}}
\newcommand{\group}{\text{group}}
\newcommand{\tr}{\text{tr}}
\newcommand{\bTheta}{\boldsymbol{\Theta}}
\newcommand{\tbTheta}{\boldsymbol{\tilde{\Theta}}}
\crefname{hypothesis}{Hypothesis}{Hypotheses}
\Crefname{ALC@unique}{Line}{Lines}
\colorlet{texcscolor}{blue!50!black}
\colorlet{texemcolor}{red!70!black}
\colorlet{texpreamble}{red!70!black}
\colorlet{codebackground}{black!25!white!25}
\lstdefinestyle{siamlatex}{%
  style=tcblatex,
  texcsstyle=*\color{texcscolor},
  texcsstyle=[2]\color{texemcolor},
  keywordstyle=[2]\color{texemcolor},
  moretexcs={cref,Cref,maketitle,mathcal,text,headers,email,url},
}
\DeclareTotalTCBox{\code}{ v O{} }
{ 
  fontupper=\ttfamily\color{black},
  nobeforeafter,
  tcbox raise base,
  colback=codebackground,colframe=white,
  top=0pt,bottom=0pt,left=0mm,right=0mm,
  leftrule=0pt,rightrule=0pt,toprule=0mm,bottomrule=0mm,
  boxsep=0.5mm,
  #2}{#1}
\title{Hardware-Efficient Mixed-Precision CP Tensor Decomposition \thanks{The authors are with Department of Electrical and Computer Engineering, University of California at Santa Barbara, CA.
  (Emails: \email{ziy@ucsb.edu}, \email{junnanshan@ucsb.edu}, \email{zzhang01@ucsb.edu}).}}
\author{Zi Yang, Junnan Shan, and Zheng Zhang}
\title{Hardware-Efficient Mixed-Precision CP Tensor Decomposition \thanks{The authors are with Department of Electrical and Computer Engineering, University of California at Santa Barbara, CA.
  (Emails: \email{ziy@ucsb.edu}, \email{junnanshan@ucsb.edu}, \email{zzhang01@ucsb.edu}).}}
\author{Zi Yang, Junnan Shan, and Zheng Zhang}
\begin{document}
\maketitle
\begin{abstract}
    Tensor decomposition has been widely used in machine learning and high-volume data analysis. However, large-scale tensor factorization often consumes huge memory and computing cost. Meanwhile, modernized computing hardware such as tensor processing units (TPU) and Tensor Core GPU has opened a new window of hardware-efficient computing via mixed- or low-precision arithmetic representations. In this paper, we exploit the low-precision representation of tensor factorization, and propose a mixed-precision block stochastic gradient descent (SGD) method to reduce the costs of CP tensor decomposition. Our method achieves robust and fast convergence via a two-stage optimization, i.e., SignSGD followed by mixed-precision SGD. Detailed theoretical analysis is provided to prove the convergence of the proposed mixed-precision algorithm. Numerical experiments on both synthetic and realistic tensor data sets show the superior efficiency of our mixed-precision algorithm compared to full-precision CP decomposition. This work can remarkably reduce the memory, computing and energy cost on resource-constraint edge computing devices. We demonstrate this benefit via an FPGA prototype.
\end{abstract}

\begin{keywords}
Tensor decomposition, mixed-precision optimization, stochastic gradients, hardware-aware algorithms.
\end{keywords}
\begin{MSCcodes}
15A69, 90C15
\end{MSCcodes}

\section{Introduction}
\label{sec:introdution}

As a higher-order generalization of matrices, tensors~\cite{kolda2009tensor} have been used to represent and process multi-dimensional arrays in many science and engineering fields, including quantum physics \cite{dressler2021separability,huggins2019towards,nie2020hermitian,orus2019tensor}, scientific computing~\cite{bigoni2016spectral,richter2021solving}, uncertainty quantification~\cite{dolgov2015polynomial,zhang2016big,zhang2014enabling}, machine learning \cite{anandkumar2014tensor,guo2022learning,hawkins2022towards,hawkins2021bayesian,nie2018complete,novikov2015tensorizing,kim2015compression,sidiropoulos2017tensor} and many others.  Many successful applications rely on efficient tensor decompositions~\cite{bro1997parafac,de2000multilinear,grasedyck2010hierarchical,oseledets2011tensor}, which represent an original high-order high-volume data array with some low-rank factors to achieve huge memory and computing cost reduction. For instance, tensor decomposition has achieved orders-of-magnitude parameter reduction of deep neural networks \cite{hawkins2022towards,hawkins2021bayesian,kim2015compression,novikov2015tensorizing}, enabling their energy-efficient training and deployment on edge devices. As one of the most popular tensor decomposition methods, the CANDECOMP/PARAFAC (CP) decomposition~\cite{bro1997parafac} factorizes a large tensor into the summation of some rank-1 tensors. A CP factorization is often obtained via  algebraic methods \cite{domanov2017canonical,nie2017generating} or numerical optimization techniques such as gradient-based optimization~\cite{ge2017optimization} and alternating minimization \cite{comon2009tensor}. The former provides excellent theoretical guarantees, but are neither noise-resistant nor scalable to high tensor ranks. The later has better efficiency, but computing the full gradients is expensive for high-volume tensor data sets. Motivated by the success in large-scale machine learning, recent approaches use stochastic gradient descent (SGD) methods \cite{battaglino2018practical,beutel2014flexifact,kolda2020stochastic,vervliet2015randomized} to relief the high computation cost in tensor factorization. So far, most (if not all) tensor decomposition algorithms are developed for classical computing platforms (e.g., CPU and conventional GPU) that use double-precision 64-bit or single-precision 32-bit floating-point data representations. 

On the other hand, the recent revolution of artificial intelligence has triggered massive interests in computing hardware that supports mixed-precision and low-precision computation. For instance, Google’s Tensor Processing Units (TPUs)~\cite{jouppi2017datacenter} can easily handle machine learning tasks with 16-bit floating point representations. NVIDIA's tensor Core GPU supports double-, single- and half-precision floating-point operations, as well as various low-precision integer operations. Reconfigurable computing platforms such as field-programmable gate arrays (FPGA) can support arbitrarily low-precision computation to save energy and hardware utilization. These mixed-precision computing platforms are very suitable for the training and inference of deep learning models~\cite{de2018high,de2017understanding,hubara2017quantized,sun2020ultra,zhang2021fpga}, due to their error-resilient activation functions or output operators. Interestingly, recently mixed-precision computing has also shown great success in many scientific computing tasks~\cite{abdelfattah2021survey,buttari2007mixed,carson2018accelerating,carson2020three,carson2022mixed,haidar2020mixed,olivares2010accelerating} such as LU factorization, Cholesky factorization, least square optimization, GMRES. However, mixed-precision computing has been rarely investigated for tensor computation. We envision that similar memory and runtime benefit can be obtained by developing mixed-precision tensor computation algorithms. As the development of 5G and future 6G networks, more and more (possibly private and sensitive) data needs to be processed on resource-constraint edge devices, where mixed-precision tensor computation will play an increasingly important role.


In this paper, we make the first step of exploring low-precision tensor computation by proposing a novel mixed-precision CP tensor decomposition algorithm. By utilizing low-precision stochastic gradient computation in a two-stage optimization framework, our method can remarkably reduce the computation and energy costs of CP decomposition. Our main contributions are summarized below.
\begin{itemize}
\item We propose a computationally efficient CP decomposition via a mixed-precision SGD method. We improve the convergence via a mixed-precision SignSGD initialization. We carefully design the low-precision stochastic gradient computation to maximize the computational efficiency and minimize the accuracy drop via analyzing the sensitivity of each step with respect to the quantization errors. 
   \item  We prove the convergence of the proposed mixed-precision CP decomposition. Under some conditions, we firstly show that the CP decomposition problem is locally strongly convex after proper normalization. Then, we prove that SignSGD with mixed-precision gradients converges to a stationary point up to a noise level. Finally, we prove that the mixed-precision SGD has a locally linear convergence rate for our problem.
   \item Numerical experiments demonstrate that our mixed-precision approach can remarkably reduce the computation cost while attaining similar accuracy to the full-precision algorithm. An FPGA prototype further demonstrates the saving of run-time, hardware resources, and energy on edge computing devices.

\end{itemize}
 We remark that the proposed mixed-precision stochastic gradient can be applied to all SGD-based algorithms for CP tensor decomposition.

\section{Preliminary}
\subsection*{Notation} Throughout the paper, lower-case letters (e.g., $a$) denote scalars; lower-case bold letters (e.g., $\mat{a}$) denote vectors; upper case bold letters (e.g., $\mat{A}$) denote matrices. We use $\mat{1}$ or $\mat{0}$ to denote a vector/matrix whose entries are all $1$ or $0$, respectively. $\mat{I}_n$ is an $n$-by-$n$ identity matrix. We use upper-case calligraphic bold letters (e.g., $\ten{A}$) to denote tensors, which are high-dimensional generalizations of matrices. We use $[n]$ to denote the set of integers $\{1,2,\ldots,n\}$.  For a vector $\mat{v}$, $\|\mat{v}\|$ and $\|\mat{v}\|_1$ denote its Euclidean norm and $1$-norm, respectively. For a matrix $\mat{A}$, $\mat{A}^T$ denotes its transpose; $\tr(\mat{A})$ denotes the trace of $\mat{A}$; $\|\mat{A}\|$ represents the Frobenius norm, and the spectrum norm $\|\mat{A}\|_2$ is the largest singular value of $\mat{A}$. We use MATLAB-style indexing to denote submatrices. For instance, $\mat{A}(i_1:i_2,j_1:j_2)$ denotes the submatrix consisting of the rows from $i_1$ to $ i_2$ and the columns from $j_1$ to $j_2$. The function $\text{sign}(a)$ obtains the sign of $a$, i.e.,
\[
    \text{sign}(a):=\left \{
        \begin{array}{ll}
             1& \text{if } a>0  \\
             0& \text{if } a=0 \\
             -1& \text{if } a<0
        \end{array}.
    \right.
\]
The $\text{sign}$ function can be used for matrices and tensors by applying the function element-wisely. 

For a twice-differentiable function $f:\re^n\to \re$, we use $\nabla f \in \re^n$ and $\nabla^2 f \in \re^{n\times n}$ to denote the gradient and the Hessian matrix of $f$, respectively. The function $f$ is $\lambda$-strongly convex for $\lambda>0$ if the smallest eigenvalue of $\nabla^2 f$ is not less than $\lambda$. Equivalently, $f$ is $\lambda-$strongly convex if 
\[
    f(\mat{y}) \ge f(\mat{x}) + \nabla f(\mat{x})^T(\mat{y}-\mat{x}) + \frac{\lambda}{2}\|\mat{y}-\mat{x}\|^2,\, \forall \mat{x},\mat{y} \in \re^n.
\]
For the vector valued function $\mat{h}(\mat{x})=(h_1(\mat{x}),\ldots,h_m(\mat{x}))$ where $h_i:\re^n \to \re$, the Jacobian matrix $\mat{J}_{\mat{h}}(\mat{x})$ is 
\[
    \mat{J}_{\mat{h}}(\mat{x}):=\big(\nabla h_1(\mat{x}),\ldots,\nabla h_m(\mat{x}) \big)^T.
\]



\subsection{Tensors}
Tensors can be regarded as multi-dimensional data arrays \cite{landsberg2012tensors}. The space of real tensors with order $m$ and dimension $N_1,N_2,\ldots,N_m$ is denoted by $\re^{N_1\times N_2 \times \cdots \times N_m }$. The $(i_1, i_2, \cdots, i_m)$-th element of a tensor $\cA \in \re^{N_1\times N_2 \times \cdots \times N_m }$ is denoted as $a_{i_1,\ldots,i_m}$ for $1\le i_j\le N_j$. The mode-$k$ unfolding of $\cA$ is the matrix $\mat{A}_{[k]} \in \re^{N_k\times (\prod\limits_{i=1}^m N_i)/N_k}$ which is obtained by reshaping $\cA$ with the $k$th dimension being the leading dimension. The Frobenius norm of $\cA$ is  
\[
    \|\ten{A}\|_{\rm F}:=\sqrt{\sum_{i_1,\ldots,i_m}^{N_1,\ldots,N_m}a_{i_1,\ldots,i_m}^2}.
\]
For vectors $\{ \mat{u}_i\in \re^{N_i}\} _{i=1}^m$, their outer product forms an order-$m$ rank-1 tensor
\[
\ten{B}=\mat{u}_1\circ \mat{u}_2 \circ \cdots \circ \mat{u}_m \iff b_{i_1,\ldots,i_m} =\prod \limits_{k=1}^m \mat{u}_k (i_k).
\]
A tensor $\ten{A}\in \re^{N_1\times N_2 \times \cdots \times N_m } $ is said to have a rank-$r$ CP decomposition if there exist matrices $\{ \mat{U}_i \in \re^{N_i\times r}\}_{i=1} ^m$ such that
\[
   \ten{A} = [\![\mat{U}_1, \mat{U}_2, \cdots, \mat{U}_m ]\!] := \sum_{j=1}^r \mat{U}_1(:,j)\circ \cdots \circ \mat{U}_m(:,j).
\]
The smallest integer $r$ that ensures the above equality is called the CP rank of $\cA$, denoted by $\rank(\cA)$. The Khatri-Rao product of matrices $\mat{U}_1,\ldots,\mat{U}_m$ is a column-wise Kronecker product, i.e.,
\[
    \mat{U}_1\odot \cdots \odot \mat{U}_m:=\left[
    \otimes_{i=1}^m \mat{U}_i(:,1),\ldots,\otimes_{i=1}^m \mat{U}_i(:,r)
    \right],
\]
where $\otimes$ denotes the Kronecker product. It holds that $\mat{A}_{[k]}=\mat{U}_k (\odot_{i=1,i\neq k}^m \mat{U}_i)^T$.

\subsection{Precision Reprensentations} \label{subsec:low-precision}

\begin{table}[t]
    \centering
    \begin{tabular}{|c|c|c|c|c|c|c|}
    \hline 
    Type & Bits & Sign &  Exponent & Significand & Min & Max\\
    \hline
       \texttt{FP}$_{16}$  & $16$ & $1$ & $5$ & $10$ & $6.1\times 10^{-5} $ & $6.6\times 10^{4}$  \\ 
       \hline
    \texttt{FP}$_{32}$ & $32$ & $1$ & $8$ & $23$ & $1.2\times 10^{-38} $ & $3.4\times 10^{38}$ \\
    \hline
    \texttt{FP}$_{64}$ & $64$ & $1$ & $11$ & $52$ & $2.2\times 10^{-308} $ & $1.8\times 10^{308}$ \\ \hline
    \end{tabular}
    \caption{Floating Point Representations}
    \label{tab:floating}
\end{table}

In practice, numbers are represented and processed as binary strings on digital computing hardware. The binary strings can represent numbers in either fixed-point format or floating-point format. We use \texttt{INT}$_n$ and \texttt{FP}$_n$ to denote an $n$-bit fixed-point format and an $n$-bit floating-point format, respectively. The representation format is directly related to the precision of the represented number. Hence, the representation format is also called precision format.

An \texttt{INT}$_n$ data representation uses $n$ bits, where the first bit stores the sign and the other $n-1$ bits store the absolute value. The set of numbers that the \texttt{INT}$_n$ format can represent is 
$
    \{-2^{n-1},-2^{n-1}+1,\ldots,0,1,\ldots,2^{n-1}-1\}.
$
The \texttt{FP}$_n$ format uses $1$ bit to store the sign, $N_1$ bits to store significand, and $N_2$ bits to store exponent, where $N_1+N_2+1=n$. Then, the number is represented by 
$
    \text{sign} \times \text{significand} \times 2^{\text{exponent}}.
$
\texttt{FP}$_{16}$ (half precision), \texttt{FP}$_{32}$ (single precision), and \texttt{FP}$_{64}$ (double precision) are most commonly used and are supported by most devices. Their bits for each part and representation ranges are described in Table \ref{tab:floating}. Floating-point arithmetic operations are much more expensive than fixed-point arithmetic operations with the same number of bits. Clearly, low-bit representations consume less memory and computation resources but cause larger rounding-off errors. Table \ref{tab:compare_mat} compares the run-time of matrix multiplications under different precision formats on tensor core GPU. The chosen test sizes are common in computing gradients for the proposed Algorithm \ref{alg:SGD} as in \eqref{eq:Qg}. We can see that the \texttt{INT}$_8$ multiplications are $4\times $ to $5\times$ faster than \texttt{FP}$_{32}$ multiplications.
 
\begin{table}[]
    \centering
    \begin{tabular}{|c|c|c|c|}
    \hline
        $(m,k,n)$ & \texttt{INT}$_8$ time & \texttt{FP}$_{16}$ time & \texttt{FP}$_{32}$ time  \\
        \hline
        (240,$240^2$,256) & 232 ($\mathbf{4.47\times}$)	&675 (${1.54\times}$) &	1037 ($1 \times$)\\
        \hline
        (60,$60^3$,64) & 794 ($\mathbf{4.55\times}$) &	2456 (${1.47\times}$) &	 3615 ($1 \times$)\\
        \hline
        (24,$24^4$,32) &1139 ($\mathbf{4.89\times}$) &	3784 (${1.47\times}$)&	5571 ($1 \times$)\\ \hline

    \end{tabular}
    \caption{Time comparisons of matrix multiplications of $m\times k$ and $k\times n$  under various precisions on GPU. The times are measured in microseconds ($\mu$s). }
    \label{tab:compare_mat}
\end{table}


Deterministic rounding and stochastic rounding methods can be used to round a high-precision number to a lower precision. For a given precision format $p$, let ${\cal R}(p)$ be the set of numbers that can be represented by the format $p$. The ceiling and floor functions with precision $p$ are defined as 
\begin{align}
      \lceil y \rceil_p := & \min \{v \in {\cal R}(p) \cup \{+\infty\}| v\ge y \}, \nonumber \\ \lfloor y \rfloor_p := & \max \{v \in {\cal R}(p) \cup \{-\infty\}| v\le y \}. \nonumber
\end{align}
When the precision $p$ is not specified, we use ${\cal R}(p)=\mathbb{N}$ by default.
The quantization function $\texttt{Q}^D_{p,\delta}$, with precision $p$, scaling factor $\delta$, and deterministic rounding, is defined as \[
   \texttt{Q}^D_{p,\delta}(x) = \left \{
      \begin{array}{ll}
         \delta \lceil {x/\delta}\rceil_p & \text{ if } {x/\delta} \geq \frac{1}{2} \left( \lceil {x/\delta}\rceil_p + \lfloor {x/\delta}\rfloor_p \right) \\
         \delta \lfloor {x/\delta}\rfloor_p & \text{ if } {x/\delta}< \frac{1}{2} \left( \lceil {x/\delta}\rceil_p + \lfloor {x/\delta}\rfloor_p \right).
      \end{array}\right. .
\]
The quantization function $\texttt{Q}^S_{p,\delta}$ with stochastic rounding is 
\[
   \texttt{Q}^S_{p,\delta}(x) = \left \{
      \begin{array}{ll}
         \delta \lceil {x/\delta}\rceil_p & \text{with probability } \frac{{x/\delta} - \lfloor {x/\delta}\rfloor_p}{\lceil {x/\delta}\rceil_p- \lfloor{x/\delta}\rfloor_p}  \\
         \delta \lfloor {x/\delta}\rfloor_p & \text{with probability } \frac{\lceil {x/\delta}\rceil_p - {x/\delta}}{\lceil {x/\delta}\rceil_p - \lfloor{x/\delta}\rfloor_p}
      \end{array}\right. .
\]
The stochastic rounding ensures that the quantization is unbiased, i.e.,
$
      \bE(\texttt{Q}^S_{p,\delta}(x)|x) = x.
$

\section{Proposed Algorithm} \label{sec:alg}

This section presents a mixed-precision SGD-type algorithm to reduce the memory and computation cost of CP tensor decomposition. This method has a linear convergence rate when it gets close to the optimal solution.  A mixed-precision SignSGD method is utilized at the beginning to improve the convergence of the whole framework. 


\subsection{Mixed-Precision CP Decomposition} 
Given a tensor $\cA\in \re^{N_1\times \cdots \times N_m}$, the rank-$r$ CP tensor decomposition can be formulated as the optimization problem 
\begin{equation} \label{eq:TD}
   \min_{\boldsymbol{\Theta}}  f(\boldsymbol{\Theta})=\|\ten{A} -  [\![\mat{U}_1, \cdots, \mat{U}_m ]\!]\|_{\rm F}^2, \; {\rm with} \; \boldsymbol{\Theta}=\left\{ \mat{U}_i \in \mathbb{R}^{N_i \times r}\right \}_{i=1}^m.
\end{equation}
This problem can be rewritten as 
\begin{equation} \label{eq:TD sum}
   \min_{\boldsymbol{\Theta}} f:=\frac{1}{N} \sum_{i_1=1}^{N_1}\cdots \sum_{i_m=1}^{N_m}(a_{i_1\ldots i_m} - [\![\mat{U}_1(i_1,:),\cdots, \mat{U}_m(i_m,:)]\!])^2,
\end{equation}
where $N:=N_1\cdots N_m$. Since the cost function is the summation of $N$ functions, we can naturally apply an SGD-type method to solve the optimization.

Instead of using standard SGD \cite{bottou2018optimization}, we present a mixed-precision SGD-type algorithm to solve Problem \eqref{eq:TD}. Let $\mat{U}_1^s,\ldots,\mat{U}_m^s$ be the tensor factor matrices in the $s$-th iteration and $\texttt{Q}(\tilde{g}_i^s)$ be the quantized stochastic gradient with respect to $\mat{U}_i^s$. Corollary \ref{cor:convex} shows that Problem \eqref{eq:TD} is locally strongly convex around the true decomposition if the leading rows of $\mat{U}_i(1,:)$ are fixed for $i=2,\ldots,m$. We propose to update variables as 
\[
    \mat{U}_1^{s+1} = \mat{U}_1^s - \alpha_s \texttt{Q}(\tmat{g}_1^s),
\]
\[
    \mat{U}_i^{s+1}(2:N_i,:) = \mat{U}_i^s(2:N_i,:) - \alpha_s \texttt{Q}(\tmat{g}_i^s)(2:N_i,:).
\]

Problem \eqref{eq:TD} has many stationary points, and the mixed-precision SGD can easily converge to a local optimizer without a good initialization point. We propose to use mixed-precision SignSGD to find a good initialization for SGD, which updates variables as follows:
\[
    \mat{U}_i^{s+1} = \mat{U}_i^s - \alpha_s \text{sign}(\texttt{Q}(\tmat{g}_i^s)).
\]
The mixed-precision SignSGD only uses the sign of the gradient to update parameters. 
Consequently, it is more robust against non-convexity and quantization errors. In practice, we find that SignSGD is unlikely to be trapped by a stationary point. This motivates us to firstly run mixed-precision SignSGD for a number of iterations. When the error becomes small, we switch to mixed-precision SGD for better accuracy and faster convergence. 

The learning rate is updated as $\alpha_{s+1}=\eta \alpha_s$ every $K$ iterations for some constant $1>\gamma>0$. It is the multi-stage update rule. The complete mixed-precision CP decomposition \eqref{eq:TD} is presented in Algorithm \ref{alg:SGD}.

\begin{algorithm}[t]
\caption{Mixed-Precision Stochastic Gradient Algorithm for Tensor Decomposition}
\label{alg:SGD}
\begin{algorithmic}[1]
\STATE{\textbf{Input}:  tensor $\ten{A}\in \re^{N_1\times \cdots \times N_m}$, rank $r$, initialization $\{\mat{U}_i^0 \in \re^{N_i\times r}\}_{i=1}^m$, initial learning rates $\alpha^{\text{sign}}_0,\alpha^{\text{SGD}}_0>0$, thresholds $\epsilon_1>\epsilon_2>0$, quantizations $\texttt{Q}_1,\texttt{Q}_2$, sample sizes $\{n_i\}_{i=1}^m$, learning rate update intervals $K^{\text{sign}},K^{\text{SGD}}$, learning rate update constants $\eta^{\text{sign}},\eta^{\text{SGD}}$.}
\STATE{Let $s=0$.}
\STATE{$\alpha_0=\alpha_0^{\text{sign}}$.}
\WHILE{$\|\ten{A} -  [\![\mat{U}_1^s,\cdots,\mat{U}_m^s]\!]\|/\|\ten{A}\|>\epsilon_1$}
\STATE{Compute the mixed-precision gradient $\{\texttt{Q}(\tilde{\mat{g}}_{i}^s)\}_{i=1}^m$ as in Algorithm \ref{alg:grad}.}
\STATE{$\mat{U}_i^{s+1}= \mat{U}_i^s- \alpha_s \text{sign}(\texttt{Q}(\tilde{\mat{g}}_{i}^s))$.}
\STATE{$\alpha_{s+1}=\left\{\begin{array}{ll}
     \eta^{\text{sign}}\alpha_s,& \text{ if ($(s+1)$ mod $K^{\text{sign}}$) = 0} \\
      \alpha_s,& \text{ otherwise} 
\end{array} \right.$. }
\STATE{$s=s+1.$}
\ENDWHILE
\STATE{Let $\alpha_{s}=\alpha_0^{\text{SGD}}$, $s^{\text{sign}}=s$.}
\WHILE{$\|\ten{A} - [\![\mat{U}_1^s,\cdots,\mat{U}_m^s]\!]\|/\|\ten{A}\|>\epsilon_2$.}
\STATE{Compute the mixed-precision gradient $\{\texttt{Q}(\tilde{\mat{g}}_{i}^s)\}_{i=1}^m$ as in Algorithm \ref{alg:grad}.}
\STATE{$\mat{U}_1^{s+1} = \mat{U}_1^s - \alpha_s \texttt{Q}(\tilde{\mat{g}}_{1}^s)$.}
\STATE{$\mat{U}_i^{s+1}(2:N_i,:) = \mat{U}_i^s(2:N_i,:) - \alpha_s \texttt{Q}(\tilde{\mat{g}}_{i}^s)(2:N_i,:),\, i=2,\ldots,m$ \hfill }
\STATE{$\alpha_{s+1}=\left\{\begin{array}{ll}
      \eta^{\text{SGD}}\alpha_s,& \text{ if ($(s-s^\text{sign}+1)$ mod $K^{\text{SGD}}$) = 0} \\
     \alpha_s,& \text{ otherwise} 
\end{array} \right.$.}
\STATE{$s=s+1$.}
\ENDWHILE
\STATE{\textbf{Output}: factor matrices $\{\mat{U}_i^s\}_{i=1}^m$.}
\end{algorithmic}
\end{algorithm}

\subsection{Mixed-Precision Block Stochastic Gradient} \label{sec:mixed-precision gradient}
Gradient computation is often the most expensive part in SGD-type algorithms. This subsection describes how to efficiently compute the mixed-precision stochastic gradient used in Algorithm \ref{alg:SGD}.

Problem \eqref{eq:TD sum} is well-structured, therefore we use block sampling to maximize the usage of parallel computing. In each iteration, we uniformly sample a subset of indices $\cI_i\subset [N_i]$ with $|\cI_i|=n_i$ for $i=1,\ldots,m$. Then, we consider the cost function 
\[
   f_\cI:=\frac{1}{n}\|\ten{A}(\cI_1,\ldots, \cI_m) -[\![ \mat{U}_1(\cI_1,:),\cdots, \mat{U}_m(\cI_m,:) ]\!]\|_{\rm F}^2,\; {\rm with}\; n=n_1\cdots n_m.
\]
The gradient of $f_\cI$ with respect to $\mat{U}_i(\cI_i,:)$ is 
\begin{equation} \label{eq:bsg}
   \nabla_{\mat{U}_i(\cI_i,:)} f_\cI= -\frac{2}{n}\left(\ten{A}(\cI_1,\ldots, \cI_m) - [\![ \mat{U}_1(\cI_1,:),\cdots, \mat{U}_m(\cI_m,:) ]\!]\right)_{[i]} \odot_{j=1,j\neq i}^m \mat{U}_j(\cI_j,:).
\end{equation}
Therefore, the stochastic gradient $\tmat{g}_{i}:= \nabla_{ \mat{U}_i} f_\cI \in \re^{N_i\times r} $ is given as
\begin{equation} \label{eq:tg}
   \tmat{g}_{i}(j_i,:) =\left \{
   \begin{array}{ll}
     \nabla_{\mat{U}_i(j_i,:)} f_\cI & \text{if $j_i\in \cI_i$} \\
    0 & \text{if $j_i\notin \cI_i$}
   \end{array}\right. .
\end{equation}
We regard $\cI:=(\cI_1,\ldots,\cI_m)$ as a random variable. Each $\cI_i$ is sampled uniformly, hence it holds that 
$ \bE_\cI[\tmat{g}_{i}] = \mat{g}_i$ for $i=1,\ldots,m,$
where $\mat{g}_i:= \nabla_{ \mat{U}_i} f$.

We compute the quantized value of the block stochastic gradient $\tmat{g}_{i}$ \eqref{eq:tg} as follows:

\begin{subequations}\label{eq:Qg}
\begin{eqnarray} \label{eq:Qg-a}
 &\ten{M}&:= -\ten{A}(\cI_1,\ldots, \cI_m) + [\![ \texttt{Q}_{1}(\mat{U}_1(\cI_1,:)),\cdots, \texttt{Q}_{1}(\mat{U}_m(\cI_m,:))]\!] ,\\
    \label{eq:Qg-c}    &\mat{V}_i&:= \odot_{j=1,j\neq i}^m \texttt{Q}_{1}(\mat{U}_j(\cI_j,:)),\\
    \label{eq:Qg-d}    &\texttt{Q}(\tmat{g}_{i})(\cI_i,:)&:= \texttt{Q}_2(\mat{M}_{[i]}) \texttt{Q}_2(\mat{V}_i),
\end{eqnarray}
\end{subequations}
where $\texttt{Q}_1,\texttt{Q}_2$ are two quantization functions as described in Section \ref{subsec:low-precision}. The steps for computing the stochastic gradient in mixed-precision are summarized in Algorithm \ref{alg:grad}.

\begin{algorithm}[t]
\caption{Compute Mixed-Precision Stochastic Gradient}
\label{alg:grad}
\begin{algorithmic}[1]
\STATE{\textbf{Input}: tensor $\ten{A}\in \re^{N_1\times \cdots \times N_m}$, rank $r$, factor matrices $\{\mat{U}_i^0 \in \re^{N_i\times r}\}_{i=1}^m$, quantization functions $\texttt{Q}_1,\texttt{Q}_2$, sample sizes $\{n_i\}_{i=1}^m$.}
\STATE{Randomly sample $\cI_i\subset [n_i]$ with $|\cI_i|=n_i$ for $i\in [m]$.}
\STATE{Compute $\ten{M}= -\ten{A}(\cI_1,\ldots, \cI_m) + [\![ \texttt{Q}_{1}(\mat{U}_1(\cI_1,:)),\cdots, \texttt{Q}_{1}(\mat{U}_m(\cI_m,:))]\!]$.}
\STATE{Compute $\mat{V}_i= \odot_{j=1,j\neq i}^m \texttt{Q}_{1}(\mat{U}_j(\cI_j,:))$ for $i\in [m]$.}
\STATE{Compute $\texttt{Q}(\tmat{g}_{i})(\cI_i,:)= \texttt{Q}_2(\mat{M}_{[i]}) \texttt{Q}_2(\mat{V}_i)$.}
\STATE{\textbf{Output}: Mixed-precision stochastic gradient $\{\texttt{Q}(\tmat{g}_{i})\}_{i=1}^m$.}
\end{algorithmic}
\end{algorithm}

The subtraction in \eqref{eq:Qg-a} and the Khatri–Rao product in \eqref{eq:Qg-c} are both sensitive to quantization errors, and extremely low-precision quantization function $\texttt{Q}_1$ will cause bad convergence behavior. Therefore, we use precision \texttt{FP}$_{16}$ and scale $\delta=1$ for $\texttt{Q}_1$, i.e., $\texttt{Q}_1=\texttt{Q}_{\text{FP}16,1}.$
The last matrix multiplication \eqref{eq:Qg-d} is more robust against errors. Consequently, the quantization functions $\texttt{Q}_2$ can use an extremely low precision. Practically, \texttt{INT}$_4$ and \texttt{INT}$_8$ always work well, and \texttt{INT}$_2$ can work when the tensor rank $r$ is small. For the specific quantization $\texttt{Q}_{\texttt{INT}_b,\delta}(\mat{X})$ for a matrix $\mat{X}$, the scaling factor $\delta$ depends on $\mat{X}$ and the precision \texttt{INT}$_b$. We typically set $\delta$ slightly less than $\frac{\max \{\mat{X} \}}{2^{b-1}-1}$. This ensures most entries of $\mat{X}$ lie in the representation range of \texttt{INT}$_b$ while preserving low quantization errors.



\subsubsection*{Complexity Analysis}
The sub-tensor $\ten{M}$ in \eqref{eq:Qg-a} is only computed once for all $i\in [m]$, and the computation requires around $2nr$ arithmetic operations.  Computing each $\mat{V}_i$ in \eqref{eq:Qg-c} needs $\frac{N}{N_i}r$ arithmetic operations, so the total number of operations of step \eqref{eq:Qg-c} is $\sum_{i=1}^m \frac{n}{n_i}r$. Step \eqref{eq:Qg-d} involves a tensor unfolding along its $i$th dimension. The matrix multiplication \eqref{eq:Qg-d} for each $i$ requires about $2nr$ operations. In total, we will do $m$ such multiplications and the total number of operations is $2mnr$. Therefore, the most expensive step in \eqref{eq:Qg} is the matrix multiplications \eqref{eq:Qg-d}. Fortunately, \eqref{eq:Qg-d} is robust against quantization noises, and its cost can be reduced significantly by using ultra low-precision quantization functions. Suppose that each arithmetic operation of precision $p$ costs $c_p$ computation resources. Computing the mixed-precision gradient as in \eqref{eq:Qg} requires $C(p_1,p_2)=c_{p_1}\left(2nr+\sum_{i=1}^m \frac{n}{n_i}r\right)+2c_{p_2}mnr$ resources, where $p_1,p_2$ are the precision formats used by $\texttt{Q}_1,\texttt{Q}_2$ respectively. In practice, we typically choose $p_1$ as $\texttt{FP}_{16}$ and $p_2$ as some low-bit fixed-point format. The computation resource consumed by a specific representation format is proportional to the number of bits. On modern hardware, fixed-point operations typically use less resources and are much faster than floating-point operations. More specifically, fixed-point operations use less than half resources of floating-point operations with the same number of bits \cite{hettiarachchi2020integer}. Therefore, we have the estimation
$
    c_{\texttt{FP}_{16}}\approx \frac{1}{2}c_{\texttt{FP}_{32}}, \, c_{\texttt{INT}_{b}}\approx \frac{b}{64}c_{\texttt{FP}_{32}}.
$
Then, the estimated costs of \eqref{eq:Qg} under full-precision and low-precision are 
\[
    C(\texttt{FP}_{32},\texttt{FP}_{32}) \approx(2+2m+\sum_{i=1}^m \frac{1}{n_i}) nrc_{\texttt{FP}_{32}}\approx (2+2m) nrc_{\texttt{FP}_{32}},
\]
\[
    C(\texttt{FP}_{16},{\texttt{INT}_{b}}) \approx(1+\frac{b}{32}m+\sum_{i=1}^m \frac{1}{n_i}) nrc_{\texttt{FP}_{32}}\approx (1+\frac{b}{32}m) nrc_{\texttt{FP}_{32}}.
\]
The computation saving of using mixed-precision is 
\[
    C(\texttt{FP}_{16},{\texttt{INT}_{b}})/C(\texttt{FP}_{32},\texttt{FP}_{32}) \approx \frac{1+\frac{b}{32}m}{2+2m}.
\]
The cost reduction of our proposed mixed-precision gradient is more obvious for high-order tensors and smaller number of bits. Table \ref{tab:complexity} shows the normalized computational cost for orders $m=3,4,5$ and precision $\texttt{INT}_{8},\texttt{INT}_{4},\texttt{INT}_{2}$, respectively.

\begin{table}[]
    \centering
    \begin{tabular}{|c|c|c|c|}
    \hline 
         &  $\texttt{INT}_{8}$&$\texttt{INT}_{4}$ & $\texttt{INT}_{2}$\\
         \hline
        $m=3$ & 21.9\% & 17.2\% & 14.8\% \\
        \hline
        $m=4$ & 20.0\% & 15.0\% & 12.5\% \\
        \hline
        $m=5$ & 18.75\% & 13.5\% & 10.9\% \\ \hline
    \end{tabular}
    \caption{Normalized computation cost compared with full-precision for various orders and precisions.}
    \label{tab:complexity}
\end{table}



\section{Convergence Analysis}\label{sec:convergence}
This section presents the convergence result of Algorithm \ref{alg:SGD}.  Under some generic conditions, we prove that the tensor decomposition problem \eqref{eq:TD} is locally strongly convex after proper normalization.  We prove that the mixed-precision SignSGD converges to some stationary points up to some noise caused by stochasticity and quantization errors. We also prove that the mixed-precision SGD has a locally linear convergence rate around the global minimizer.

\subsection{Locally Strong Convexity} \label{subsec:convex}
This subsection shows the locally strong convexity of the problem \eqref{eq:TD} after proper normalization. Note that Problem \eqref{eq:TD} itself is non-convex and it does not have local convexity as well. Suppose that the tensor $\cA$ has the CP decomposition $\ten{A} = [\![\mat{U}_1, \mat{U}_2, \cdots, \mat{U}_m ]\!]$.
Then, it holds that 
\begin{equation}\label{eq:decom scale}
    \cA  = \sum_{j=1}^r c_{1,j}\mat{U}_1(:,j) \circ \cdots \circ c_{m,j} \mat{U}_m(:,j),
\end{equation}
for any $c_{i,j}$'s as long as $\Pi_{i=1}^m c_{1,j}=1$. Therefore, the CP decomposition problem \eqref{eq:TD} has an infinite number of minimizers, but many solutions differ only with scaling factors. Therefore, we fix the elements $\{\mat{U}_i(1,:)\}_{i=2}^m$ and assume that $\mat{U}_i(1,:) = \mat{1}^T$ for $2\le i\le m$ without loss of generality. The CP decomposition problem \eqref{eq:TD} now becomes 
\begin{equation} \label{eq:TD scale}
 \min_{\tbTheta}\tilde{f}(\tbTheta) := \left\|\ten{A} - [\![\mat{U}_1, \big[\mat{1}^T;\tmat{U}_2 \big], \cdots , \big[\mat{1}^T;\tmat{U}_m \big] ]\!]\right \|^2,
\end{equation}
where $\tbTheta:=(\mat{U}_1,\tmat{U}_2,\ldots,\tmat{U}_m)$ and $\mat{U}_1\in \re^{N_1\times r},\tmat{U}_i\in \re^{(N_i-1)\times r}$ for $i=2,\ldots,m$. 

It can be shown that the normalized problem \eqref{eq:TD scale} is strongly convex around its global minimizers. For the tensor $\cA \in \mathbb{R}^{N_1 \times N_2 \times \ldots \times N_m}$ with $N_1\ge N_2\ge \cdots \ge N_m$, we define the largest rank $r_m$ such that the problem \eqref{eq:TD scale} is locally strongly convex. Let 
\begin{equation} \label{eq:r3}
    r_3 := 
         \tilde{N}_{m-2} \lfloor \frac{\tilde{N}_{m-1}\tilde{N}_m}{\tilde{N}_{m-2}+\tilde{N}_{m-1}+\tilde{N}_m-2 }\rfloor,
\end{equation}
where $\tilde{N}_{m-2},\tilde{N}_{m-1}$ and $ \tilde{N}_{m}$ are the largest integers such that (i) $\tilde{N}_{m-2}$ is even, (ii) $\tilde{N}_{m-2}\ge \tilde{N}_{m-1} \ge \tilde{N}_{m}$, and (iii) $N_i \ge \tilde{N}_i$ for $i=m-2,m-1,m$. Then, the upper bound $r_m$ is computed recursively by
\begin{equation} \label{eq:rm}
    r_{k}:=N_{m-k+1} \min\{r_{k-1},\lfloor \frac{N_{m-k+2}\cdots N_m}{N_{m-k+1}+\cdots+N_m-k+1} \rfloor\},\, k=4,\ldots,m.
\end{equation}
The upper bound $r_m$ is around $\frac{N_1\cdots N_m}{N_1+\cdots+N_m-m+1} $ when $N_1,\ldots,N_m$ are large. 

The locally strong convexity holds generically when $r\le r_m$. We say a property is generic if it is true on the whole space except a subset with zero measure \cite{cox2013ideals}. The rigorous result is presented in Theorem \ref{thm:convex}.

\begin{theorem} \label{thm:convex}
    Suppose that $N_1\ge N_2\ge \cdots \ge N_m$ and $r\le r_m$ for $r_m$ in \eqref{eq:rm}. Let  
    \[
        \cA :=[\![ \mat{U}_1^*, \big[\mat{1}^T;\tmat{U}_2^*\big], \cdots, \big[\mat{1}^T;\tmat{U}_m^*\big] ]\!],
    \]
    where $\mat{U}_1^*\in \re^{N_1\times r},\tmat{U}_i^*\in \re^{(N_i-1)\times r},2\le i\le m$.
    Then, for generic $\tbTheta^*:=(\mat{U}_1^*,\ldots,\tmat{U}_m^*)$, the Hessian matrix $\nabla^2 \tilde{f}(\tbTheta^*)$ is positive definite.
    

\end{theorem}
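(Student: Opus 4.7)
The plan is to reduce the positive-definiteness of $\nabla^2 \tilde{f}(\tbTheta^*)$ to the injectivity of the Jacobian of the CP parametrization, and then establish that injectivity by a dimension/secant-variety argument.

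First, observe that by construction $\tbTheta^*$ is a \emph{zero-residual} minimizer of $\tilde{f}$. Writing $\tilde{f}(\tbTheta) = \|\phi(\tbTheta) - \ten{A}\|^2$ with $\phi(\tbTheta) := [\![\mat{U}_1, [\mat{1}^T;\tmat{U}_2],\ldots,[\mat{1}^T;\tmat{U}_m]]\!]$, the Gauss--Newton expansion yields
\[
    \nabla^2 \tilde{f}(\tbTheta) = 2\,\mat{J}_\phi(\tbTheta)^T\mat{J}_\phi(\tbTheta) + 2\sum_{\alpha}\bigl(\phi(\tbTheta)-\ten{A}\bigr)_\alpha \nabla^2 \phi_\alpha(\tbTheta),
\]
and at $\tbTheta^*$ the second sum vanishes. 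Hence $\nabla^2 \tilde{f}(\tbTheta^*) = 2\,\mat{J}_\phi(\tbTheta^*)^T\mat{J}_\phi(\tbTheta^*)\succeq 0$, which is positive definite if and only if $\mat{J}_\phi(\tbTheta^*)$ has full column rank.

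Next I would analyze $\mat{J}_\phi$ geometrically. The parameter count is $p := r(N_1 + \cdots + N_m - m + 1)$, and the image of $\mat{J}_\phi$ is spanned by the derivatives of each of the $r$ rank-1 summands with respect to one mode at a time, subject to the first row of each $\tmat{U}_i$ ($i\ge 2$) being fixed. This is precisely the tangent space at $\phi(\tbTheta^*)$ to the parametrized rank-$\le r$ model, with the $r(m-1)$-dimensional scaling symmetry in \eqref{eq:decom scale} already quotiented out by the normalization. By Terracini's Lemma, $\mat{J}_\phi(\tbTheta^*)$ is injective iff the $r$-th secant of the Segre variety $\mathrm{Seg}(\mathbb{P}^{N_1-1}\times\cdots\times\mathbb{P}^{N_m-1})$ achieves its expected dimension $p$ at $\phi(\tbTheta^*)$ --- i.e.\ the Segre variety is not $r$-\emph{defective} at that point.

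The explicit bound $r_m$ in \eqref{eq:rm} is tailored to guarantee non-defectivity: for $m=3$ the formula \eqref{eq:r3} lies within the classical non-defective sub-generic range for three-way Segre varieties, and the recursion \eqref{eq:rm} extends to higher orders by slicing along the longest mode --- an order-$m$ tensor is viewed as a collection of lower-order slices, $\mat{J}_\phi$ inherits a block structure, and each block inherits full column rank from a lower-order Jacobian whose sharp bound is $r_{k-1}$. Once non-defectivity holds at one witness point, the set of $\tbTheta^*$ where $\mathrm{rank}\,\mat{J}_\phi < p$ is cut out by the simultaneous vanishing of the $p\times p$ minors of $\mat{J}_\phi$, so it is a real algebraic subvariety of the parameter space; its complement is therefore Zariski-open and of full Lebesgue measure, giving genericity. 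The main difficulty is producing the single witness $\tbTheta^*$ realizing full-rank $\mat{J}_\phi$ under the sharp recursive bound $r_m$: for $m=3$ one appeals to the known non-defectivity of three-factor Segre secants at rank $r_3$, but for $m\ge 4$ one must verify that the inductive slicing construction actually produces a point at which the required minors do not vanish --- a block-matrix rank computation whose validity is exactly what the $\min$ and $\lfloor\cdot\rfloor$ structure in \eqref{eq:rm} is designed to ensure.
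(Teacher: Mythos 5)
Your opening reduction is exactly the paper's Lemma~\ref{lemma:pdH}: at the zero-residual point $\tbTheta^*$ the Hessian equals $2\,\mat{J}_{\tmat{h}}(\tbTheta^*)^T\mat{J}_{\tmat{h}}(\tbTheta^*)$, so positive definiteness is equivalent to full column rank of the Jacobian. Your closing genericity argument (full rank is the non-vanishing of a polynomial in $\tbTheta$, hence holds generically once it holds at a single point) is also the paper's. The gap is everything in between: you never establish full column rank at any witness point under the hypothesis $r\le r_m$. For $m=3$ you defer to ``classical non-defectivity of three-way Segre secants,'' but no such blanket theorem exists: defective three-factor Segre varieties do exist (for instance $3\times 3\times 3$ tensors at $r=4$, where the fourth secant variety is Strassen's degree-$9$ hypersurface rather than the whole space, as well as the unbalanced cases), so an appeal to the literature requires a precise citation with its list of exceptional cases together with a verification that every $(N_1,N_2,N_3,r)$ with $r\le r_3$ as in \eqref{eq:r3} falls inside the proven range and avoids every exception. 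You do neither. There is also a smaller unproved step in your Terracini reduction: injectivity of the \emph{normalized} Jacobian follows from non-defectivity only after one checks that the $r(m-1)$-dimensional scaling directions meet the normalization constraint trivially (this uses that the fixed leading entries are nonzero); you gesture at this but do not argue it.

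For $m\ge 4$ the situation is worse, and you concede it yourself: ``one must verify that the inductive slicing construction actually produces a point at which the required minors do not vanish,'' and you do not perform that verification. That verification is the mathematical core of the theorem and occupies most of the paper's appendix: Proposition~\ref{prop:convex3} constructs an explicit witness for $m=3$ (splitting the $r$ columns into $N_1/2$ groups, choosing orthonormal bases $\{\mat{b}_i\},\{\mat{c}_j\}$ with nonzero leading entries, assembling the auxiliary matrices $\mat{Q}_1,\mat{Q}_2,\mat{Q}_3$, and killing the coefficients of $\mat{P}\mat{x}=\mat{0}$ block by block), and Theorem~\ref{thm:lidJ} runs the induction on $m$ with explicit choices $\mat{U}_1(:,i)=\mat{e}_j$ on groups of $L$ columns plus a column-counting inequality showing the remaining $r-L$ columns can be completed. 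Your proposal replaces this construction with the assertion that the $\min$ and floor structure of \eqref{eq:rm} ``is designed to ensure'' the rank condition --- which is a restatement of the claim, not a proof. In short: correct framing and correct genericity epilogue, but the actual content of the theorem is missing.
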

\begin{proof}
    See the proof in the Appendix \ref{sec:proof of convexity}.
\end{proof}

Problem \eqref{eq:TD scale} scales the leading rows to all one vectors, which simplifies the theoretical analysis. In practice, the leading rows can be scaled to arbitrary non-zero vectors. Consider the problem 
\begin{equation} \label{eq:TD scale arbitrary}
 \min_{\tbTheta}\tilde{f}_{A}(\tbTheta) := \|\ten{A} - [\![ \mat{U}_1 , \big[\mat{u}_2^T;\tmat{U}_2\big] , \cdots , \big[\mat{u}_m^T;\tmat{U}_m\big] ]\!]\|_{\rm F}^2,
\end{equation}
where $\mat{u}_i \in \re^{r}$ and $(\mat{u}_i)_j\neq 0,\, \forall j \in [r]$ and $i \in [m]$. Problem \eqref{eq:TD scale} can be converted Problem \eqref{eq:TD scale arbitrary} via some invertible transformations. The invertibility preserves the positive definiteness of the Hessian. Therefore, Problem \eqref{eq:TD scale arbitrary} preserves the locally strong convexity. 

\begin{corollary}\label{cor:convex}
Suppose $N_1\ge N_2\ge \cdots \ge N_m$ and $r\le r_m$ in \eqref{eq:rm}. Let $\mat{u}_2,\ldots,\mat{u}_m$ be vectors in $\re^{r}$ whose elements are all nonzero and 
\[
        \cA := [\![\mat{U}_1^*, \big[\mat{u}_2^T;\tmat{U}_2^*\big] , \cdots , \big[\mat{u}_m^T;\tmat{U}_m^*\big]]\!],
    \]
    where $\mat{U}_1^*\in \re^{N_1\times r},\tmat{U}_i^*\in \re^{(N_i-1)\times r},2\le i\le m$.
    Then, for generic $\tbTheta^*:=(\mat{U}_1^*,\ldots,\tmat{U}_m^*)$, the Hessian $\nabla^2 \tilde{f}_A(\tbTheta^*)$ is positive definite and there exists an open set $O$ containing $\tbTheta^*$ and a constant $\lambda>0$ such that the function $\tilde{f}_A(\tbTheta)$ in \eqref{eq:TD scale arbitrary} is $\lambda$-strongly convex in $O$.

\end{corollary}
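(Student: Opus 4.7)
The plan is to reduce Corollary~\ref{cor:convex} to Theorem~\ref{thm:convex} through an explicit invertible linear change of variables on the parameter space that absorbs the nonzero leading rows $\mat{u}_i$ into the all-ones leading rows of the normalized problem~\eqref{eq:TD scale}. Concretely, for each column index $j\in[r]$ I would introduce the nonzero scalar $c_j:=\prod_{i=2}^m (\mat{u}_i)_j$ and define the linear map $\Phi:\tbTheta\mapsto \tbTheta'$ column-by-column by $\mat{U}_1'(:,j):= c_j\mat{U}_1(:,j)$ and $\tmat{U}_i'(:,j):=(\mat{u}_i)_j^{-1}\tmat{U}_i(:,j)$ for $i=2,\ldots,m$. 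Because each $(\mat{u}_i)_j$ is nonzero, $\Phi$ is an invertible linear map. Distributing the scalars $(\mat{u}_i)_j^{-1}$ one index at a time into the leading $1$ of the $i$-th factor shows that $[\![\mat{U}_1',\,[\mat{1}^T;\tmat{U}_2'],\ldots,[\mat{1}^T;\tmat{U}_m']]\!]=[\![\mat{U}_1,\,[\mat{u}_2^T;\tmat{U}_2],\ldots,[\mat{u}_m^T;\tmat{U}_m]]\!]$, so the two objective functions are related pointwise by $\tilde{f}_A(\tbTheta)=\tilde{f}(\Phi(\tbTheta))$, where $\tilde{f}$ is taken with respect to the same underlying tensor $\cA$. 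In particular, $\cA$ admits the normalized representation of Theorem~\ref{thm:convex} at the parameters $\Phi(\tbTheta^*)$.

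Next, I would propagate the strong-convexity conclusion through $\Phi$. Because $\Phi$ is linear, the chain rule gives $\nabla^2 \tilde{f}_A(\tbTheta)=D\Phi^\top \nabla^2\tilde{f}(\Phi(\tbTheta))\,D\Phi$, where $D\Phi$ is a block-diagonal scaling with nonzero diagonal entries and is therefore invertible. Positive definiteness is preserved by congruence with an invertible matrix, so it suffices to show that $\nabla^2\tilde{f}(\Phi(\tbTheta^*))\succ 0$ for generic $\tbTheta^*$. Since $\Phi$ is a diffeomorphism of Euclidean spaces, it maps measure-zero sets to measure-zero sets; if $E$ denotes the exceptional set from Theorem~\ref{thm:convex}, then $\Phi^{-1}(E)$ is again measure-zero and serves as the exceptional set for the Corollary. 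For any $\tbTheta^*\notin \Phi^{-1}(E)$, Theorem~\ref{thm:convex} applied to $\Phi(\tbTheta^*)$ yields $\nabla^2\tilde{f}(\Phi(\tbTheta^*))\succ 0$, and the congruence identity above then gives $\nabla^2\tilde{f}_A(\tbTheta^*)\succ 0$.

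Finally, the open neighborhood $O$ and the uniform constant $\lambda$ follow by a standard continuity argument. The function $\tilde{f}_A$ is a polynomial in $\tbTheta$, so its Hessian depends continuously on $\tbTheta$, and hence so does the smallest eigenvalue map $\tbTheta\mapsto \lambda_{\min}(\nabla^2\tilde{f}_A(\tbTheta))$. Since this eigenvalue is strictly positive at $\tbTheta^*$, there exists an open neighborhood $O$ of $\tbTheta^*$ on which it stays at least $\lambda:=\tfrac12\lambda_{\min}(\nabla^2\tilde{f}_A(\tbTheta^*))$, which gives $\lambda$-strong convexity of $\tilde{f}_A$ on $O$ by the integral characterization of strong convexity. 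The main (modest) obstacle is the bookkeeping in the first step: verifying that the column-wise rescaling $\Phi$ truly intertwines the two parametrizations of $\cA$ and that the genericity set transfers correctly under $\Phi$. Once those identities are written out explicitly, the chain-rule congruence and continuity steps are routine.
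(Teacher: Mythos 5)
Your proposal is correct and follows essentially the same route as the paper's own proof: an invertible column-wise-scaling change of variables that reduces $\tilde{f}_A$ to the normalized objective $\tilde{f}$ of Theorem \ref{thm:convex}, a Hessian congruence identity to transfer positive definiteness, and continuity of the smallest eigenvalue to obtain the neighborhood $O$ and the constant $\lambda$. In fact, you make explicit two points the paper leaves implicit---the concrete form of the nonsingular matrix $\mat{D}$ (your map $\Phi$) and the fact that the generic (full-measure) set of Theorem \ref{thm:convex} pulls back to a generic set under this linear map---so your write-up is, if anything, the more complete one.
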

\begin{proof}
    To simplify the descriptions, here we regard $\tbTheta$ as a vector including all optimization variables.
    There exists a nonsingular matrix $\mat{D}$ such that $\tilde{f}_A(\tbTheta)= \tilde{f}(\mat{D}\tbTheta)$. The Hessian $\nabla^2 \tilde{f}(\mat{D}\tbTheta^*)$ is positive definite by Theorem \ref{thm:convex}. It holds that 
    \[
        \nabla^2 \tilde{f}_A(\tbTheta^*) = \mat{D}(\nabla^2 \tilde{f}(\mat{D}\tbTheta^*)) \mat{D}^T.
    \]
    Thus, the Hessian
    $
        \nabla^2 \tilde{f}_A(\tbTheta^*) 
    $
    is positive definite. Since the eigenvalues of a matrix are continuous with respect to all matrix elements \cite{horn2012matrix}, there exists a constant $\lambda>0$ and an open set $O$ containing $\tbTheta^*$ such that the smallest eigenvalue of $\nabla^2 \tilde{f}_A(\tbTheta) $ is not less than $\lambda$ in $O$. In other words, $\tilde{f}_A(\tbTheta)$ is $\lambda$-strongly convex in $O$.
\end{proof}

Based on Corollary \ref{cor:convex}, we can prove that Algorithm \ref{alg:SGD} has a local convergence rate in $O$ after switching to mixed-precision SGD.

\subsection{Convergence of Algorithm \ref{alg:SGD}}
We show that (1) the mixed-precision SignSGD in Algorithm \ref{alg:SGD} converges to a stationary point up to some noise, (2) the mixed-precision SGD in Algorithm \ref{alg:SGD} has a linear convergence rate around the true CP decomposition. 

Let $\bTheta^s:=(\mat{U}_1^s,\ldots,\mat{U}_m^s)$ denote the factor matrices at the $s$-th iteration. Suppose that $\{\bTheta^s\}_{s=0}^{S_1}$ and $\{\bTheta^s\}_{s=S_1+1}^{S_2}$ are generated by mixed-precision SignSGD and mixed-precision SGD respectively in Algorithm \ref{alg:SGD}. Recall that $f$ is the objective function defined in \eqref{eq:TD}. We make the following assumptions.

\begin{assumption} \label{assumption:bound H}
Assume that 
$
    \|\nabla^2 f(\bTheta^s)\|_2 \le L,\, s=0,\ldots,S_2.
$

\end{assumption}

\begin{assumption} \label{assumption:bound variance}
Let $\tilde{\mat{g}}^s$ be the stochastic gradient at the $s$-th iteration. Assume that for $s\in [S_2]$ and $i\in [m]$, it holds
\[
\bE(\|\tilde{\mat{g}}_i^s(j,k)-\mat{g}_i^s(j,k)\|^2)\le \sigma_g^2, \, \bE(\|\texttt{Q}(\tilde{\mat{g}}_i^s)(j,k)-\tilde{\mat{g}}_i^s(j,k)\|^2)\le \sigma_Q^2,\, \forall j\in [N_i], k\in [r].
\]

\end{assumption}

Assumption \ref{assumption:bound H} assumes the Hessian matrices are bounded, which is widely used in the convergence analysis of SGD methods. Assumption \ref{assumption:bound variance} ensures the variance of the stochastic gradient and the quantization error are both bounded. Under Assumption \ref{assumption:bound variance}, the quantized stochastic gradient can be bounded as 
 \begin{eqnarray*}
    \bE(\|\texttt{Q}(\tilde{\mat{g}}_i^s)(j,k)-{\mat{g}}_i^s(j,k)\|)&\le& \bE(\|\texttt{Q}(\tilde{\mat{g}}_i^s)(j,k)-{\tmat{g}}_i^s(j,k)\|) + \bE(\|\tilde{\mat{g}}_i^s(j,k)-{\mat{g}}_i^s(j,k)\|)\\
    &\le& \sqrt{\bE(\|\texttt{Q}(\tilde{\mat{g}}_i^s)(j,k)-\tilde{\mat{g}}_i^s(j,k)\|^2)}+\sqrt{\bE(\|\tilde{\mat{g}}_i^s(j,k)-\mat{g}_i^s(j,k)\|^2)}\\
    &\le& \sigma_Q+\sigma_g.
 \end{eqnarray*}

\subsubsection{Convergence of Mixed-Precision SignSGD}
We show the convergence of the mixed-precision SignSGD in Algorithm \ref{alg:SGD}. Our proof is partially motivated by \cite{bernstein2018signsgd}.
\begin{theorem} \label{thm:signSGD}
    Let $\{\bTheta^s\}_{s=0}^{S_1}$ be the sequence generated by the SignSGD update in Algorithm \ref{alg:SGD}. Under Assumption \ref{assumption:bound H} and Assumption \ref{assumption:bound variance}, we have 
    \begin{equation} \label{eq:signSGD}
        \sum_{s=0}^{S_1-1}  \sum_{i=1}^m \frac{n_i}{N_i}\alpha_s\|\mat{g}_i^s\|_1 \le f(\bTheta^0) +\frac{1}{2}r L \sum_{s=0}^{S_1-1}\sum_{i=1}^m n_i\alpha_s^2  + 2r(\sigma_g+\sigma_Q)\sum_{s=1}^{S_1-1}\sum_{i=1}^m n_i\alpha_s.
    \end{equation}
\end{theorem}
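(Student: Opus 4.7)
The plan is to adapt the standard SignSGD convergence argument to this block-sampled, mixed-precision setting. I would begin with the descent lemma for $L$-smooth functions (from Assumption \ref{assumption:bound H}):
\[
f(\bTheta^{s+1}) \le f(\bTheta^s) + \langle \nabla f(\bTheta^s),\, \bTheta^{s+1} - \bTheta^s\rangle + \frac{L}{2}\|\bTheta^{s+1}-\bTheta^s\|^2.
\]
The quadratic term is immediate: the sign update modifies only entries $(j,k)$ of $\mat{U}_i^s$ with $j\in\cI_i^s$, and each such entry has magnitude at most $\alpha_s$, so $\|\bTheta^{s+1}-\bTheta^s\|^2 \le r\alpha_s^2\sum_{i=1}^m n_i$. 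After summing over $s$ this accounts for the $\tfrac{1}{2}rL\sum_i n_i\alpha_s^2$ term in the claim.

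The linear term is the crux. I would use the pointwise inequality
\[
-g\cdot \mathrm{sign}(q) \le -|g| + 2|q-g|,
\]
valid for any real $g,q$ (checked by the three cases of matching, mismatching, or zero signs). Applying it at each coordinate with $q=\texttt{Q}(\tilde{\mat{g}}_i^s)(j,k)$ and $g=\mat{g}_i^s(j,k)$, and using that $q=0$ whenever $j\notin\cI_i^s$ so the bound is effective only on the sampled block, gives
\[
-\mat{g}_i^s(j,k)\,\mathrm{sign}(\texttt{Q}(\tilde{\mat{g}}_i^s)(j,k)) \le \bigl(-|\mat{g}_i^s(j,k)| + 2|\texttt{Q}(\tilde{\mat{g}}_i^s)(j,k)-\mat{g}_i^s(j,k)|\bigr)\mathbb{1}[j\in\cI_i^s].
\]
Taking conditional expectation given $\bTheta^s$, the indicator contributes the factor $\mathbb{P}[j\in\cI_i^s]=n_i/N_i$, and the error magnitude is bounded using Assumption \ref{assumption:bound variance} together with an $L^1$--triangle inequality and Jensen: $\bE[|q-g|]\le \bE[|q-\tilde{g}|]+\bE[|\tilde{g}-g|]\le \sigma_Q+\sigma_g$. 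Summing over $(j,k)\in[N_i]\times[r]$ and then over $i$ yields
\[
\bE\bigl[\langle \nabla f(\bTheta^s),\, \bTheta^{s+1}-\bTheta^s\rangle \,\big|\, \bTheta^s\bigr] \le -\alpha_s\sum_i\tfrac{n_i}{N_i}\|\mat{g}_i^s\|_1 + 2r\alpha_s(\sigma_g+\sigma_Q)\sum_i n_i,
\]
which simultaneously produces the $n_i/N_i$ factor on the signal norms and the $n_i$ factor on the noise term.

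Combining this with the quadratic bound inside the descent lemma and rearranging gives, conditional on $\bTheta^s$,
\[
\alpha_s\sum_i\tfrac{n_i}{N_i}\|\mat{g}_i^s\|_1 \le \bE[f(\bTheta^s)-f(\bTheta^{s+1})\,|\,\bTheta^s] + \tfrac{1}{2}rL\alpha_s^2\sum_i n_i + 2r\alpha_s(\sigma_g+\sigma_Q)\sum_i n_i.
\]
Taking total expectation, summing $s=0,\ldots,S_1-1$, telescoping, and using $f(\bTheta^{S_1})\ge 0$ delivers the stated bound. The main technical obstacle is correctly distributing the sampling probability: the $n_i/N_i$ factor must attach cleanly to $\|\mat{g}_i^s\|_1$ while the noise term should scale with $n_i$ rather than $N_i$ or $\sqrt{n_i N_i}$. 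This forces the sign analysis to be localized to $j\in\cI_i^s$ before the variance bound is applied — effectively interpreting Assumption \ref{assumption:bound variance} as holding conditionally on $j\in\cI_i^s$ — which is what lets the $n_i/N_i$ indicator factor pull out uniformly for both the signal and the noise contributions.
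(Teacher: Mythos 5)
Your proof is correct and takes essentially the same route as the paper's: the descent lemma from Assumption \ref{assumption:bound H}, the same quadratic bound $\tfrac{1}{2}rL\alpha_s^2\sum_i n_i$, a signal/noise split of the linear term localized to the sampled block (yielding the $n_i/N_i$ factor after taking expectation over the sampling), and telescoping with $f(\bTheta^{S_1})\ge 0$. The only difference is cosmetic: where the paper writes the linear term exactly via a sign-mismatch indicator and bounds $\bE\big[|\mat{g}_i^s(j,k)|\,I(\mathrm{mismatch})\big]$ through Markov's inequality, you use the pointwise inequality $-g\,\mathrm{sign}(q)\le -|g|+2|q-g|$, which packages the same observation (a mismatch forces $|q-g|\ge |g|$) deterministically before taking expectations, and both yield the identical noise term $2r(\sigma_g+\sigma_Q)\sum_i n_i\alpha_s$.
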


\begin{proof}
Under Assumption \ref{assumption:bound H}, it holds that
\begin{equation} \label{eq:signSGDproof}
\begin{aligned}
    f(\bTheta^{s+1}) \le& f(\bTheta^s) + \sum_{i=1}^m \tr\big((\mat{g}_i^s)^T(\mat{U}^{s+1}_i-\mat{U}^{s}_i)\big) + \sum_{i=1}^m \frac{L}{2}\|\mat{U}^{s+1}_i-\mat{U}^{s}_i\|_{\rm F}^2 \\
    =& f(\bTheta^{s}) - \alpha_s\sum_{i=1}^m \tr \big((\mat{g}_i^s(\cI_i^s,:))^T \sign(\texttt{Q}(\tilde{\mat{g}}_i^s)(\cI_i^s,:))\big) + \sum_{i=1}^m \frac{L}{2}\|\alpha_s \sign(\texttt{Q}(\tilde{\mat{g}}_i^s)(\cI_i^s,:))\|^2\\
    =& f(\bTheta^{s}) + \frac{1}{2}\alpha_s^2 r L\sum_{i=1}^m n_i - \alpha_s \sum_{i=1}^m \|\mat{g}_i^s(\cI_i^s,:)\|_1\\
     &\qquad +2\alpha_s \sum_{i=1}^m \sum_{j\in \cI_i^s}\sum_{k=1}^r |\mat{g}_i^s(j,k)|I\left(\sign \left(\mat{g}_i^s\left(j,k\right)\right)\neq \sign\left( \texttt{Q}\left(\tilde{\mat{g}}_i^s\left(j,k\right)\right)\right)\right),
\end{aligned} 
\end{equation}
where $I$ is the indicator function such that $I(\text{true})=1,I(\text{false})=0$.
Considering the part $I(\sign(\mat{g}_i^s(j,k))\neq \sign( \texttt{Q}(\tilde{\mat{g}}_i^s(j,k)))$ in the above, we have 
\begin{eqnarray*}
    \bE[I(\sign(\mat{g}_i^s(j,k))\neq \sign( \texttt{Q}(\tilde{\mat{g}}_i^s)(j,k)] &=& 
    \mathbb{P}[I\big(\sign(\mat{g}_i^s(j,k))\neq \sign( \texttt{Q}(\tilde{\mat{g}}_i^s)(j,k))\big)]\\
    &\le& \mathbb{P}[\|\texttt{Q}(\tilde{\mat{g}}_i^s)(j,k)-\mat{g}_i^s(j,k)\|\ge |\mat{g}_i^s(j,k)| ] \\
    &\le & \frac{\bE[\|\texttt{Q}(\tilde{\mat{g}}_i^s)(j,k)-\mat{g}_i^s(j,k)\| ]}{|\mat{g}_i^s(j,k)|} \\
    &\le& \frac{\sigma_g+\sigma_Q}{|\mat{g}_i^s(j,k)|}
\end{eqnarray*}
It implies that 
\[
    \bE[\sum_{j\in \cI_i^s}\sum_{k=1}^r |\mat{g}_i^s(j,k)|I(\sign(\mat{g}_i^s(j,k))\neq \sign( \texttt{Q}(\tilde{\mat{g}}_i^t)(j,k))]  \le \sum_{j\in \cI_i^s}\sum_{k=1}^r (\sigma_g+\sigma_Q)=n_i r(\sigma_g+\sigma_Q).
\]
Then, we take expectation on both sides of \eqref{eq:signSGDproof} and get
\[
    f(\bTheta^{s+1})\le f(\bTheta^s) + \frac{1}{2}\alpha_s^2 r L\sum_{i=1}^m n_i - \alpha_s \sum_{i=1}^m \frac{n_i}{N_i}\|\mat{g}_i^s\|_1+2\alpha_s r(\sigma_g+\sigma_Q)\sum_{i=1}^m n_i.
\]
After summing up both sides for $s=0,\ldots,S_1-1$ and rearrangement, we have
\[
    \sum_{s=0}^{S_1-1}  \sum_{i=1}^m \frac{n_i}{N_i}\alpha_s\|\mat{g}_i^s\|_1 \le f(\bTheta^0)-f(\bTheta^{S_1}) +\frac{1}{2}r L \sum_{s=0}^{S_1-1}\sum_{i=1}^m n_i\alpha_s^2  + 2r(\sigma_g+\sigma_Q)\sum_{s=0}^{S_1-1}\sum_{i=1}^m n_i\alpha_s.
\]
It implies the result \eqref{eq:signSGD} since $f(\bTheta^{S_1})\ge 0$.
\end{proof}



In practice, we usually choose a relatively large constant learning rate $\alpha_s=\alpha$ to accelerate the convergence at the beginning. We prove in Corollary \ref{cor:signSGD} that a constant learning rate provides $O(\frac{1}{T_1})$ convergence rate up to some noise.
\begin{corollary}\label{cor:signSGD}
Under conditions of Theorem \ref{thm:signSGD}, if $\alpha_s=\alpha$, then 
\[
    \min_{s=0,\ldots,S_1-1} \|\mat{g}_i^s\|_1 \le O(\frac{1}{S_1})+\frac{1}{\gamma}(\frac{\alpha L}{2}+2\sigma_g+2\sigma_Q)r\sum_{i=1}^m n_i,
\]
where $\gamma=\min \limits_{i\in [m]} \frac{n_i}{N_i}$.
\end{corollary}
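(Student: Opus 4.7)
The plan is to specialize the inequality \eqref{eq:signSGD} from Theorem \ref{thm:signSGD} to the constant step-size case $\alpha_s = \alpha$ and then convert the resulting time-averaged bound into a bound on the minimum. There is no genuine obstacle in the argument; it is a bookkeeping corollary of the main theorem.

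First I would set $\alpha_s = \alpha$ on both sides of \eqref{eq:signSGD} to obtain
\[
\alpha\sum_{s=0}^{S_1-1}\sum_{i=1}^{m}\frac{n_i}{N_i}\|\mat{g}_i^s\|_1 \;\le\; f(\bTheta^0) + \tfrac{1}{2}\,rL\,\alpha^2 S_1 \sum_{i=1}^m n_i + 2r(\sigma_g+\sigma_Q)\,\alpha S_1 \sum_{i=1}^m n_i.
\]
Next, using the definition $\gamma = \min_{i\in[m]} n_i/N_i > 0$, I would lower-bound the left-hand side by $\gamma\alpha \sum_{s=0}^{S_1-1}\sum_{i=1}^m \|\mat{g}_i^s\|_1$. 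Dividing both sides by $\gamma\alpha S_1$ yields an average-case bound
\[
\frac{1}{S_1}\sum_{s=0}^{S_1-1}\sum_{i=1}^{m}\|\mat{g}_i^s\|_1 \;\le\; \frac{f(\bTheta^0)}{\gamma\,\alpha\,S_1} + \frac{1}{\gamma}\Bigl(\tfrac{\alpha L}{2} + 2\sigma_g + 2\sigma_Q\Bigr)\,r\sum_{i=1}^m n_i.
\]

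Finally, the minimum over $s$ is always bounded above by the average over $s$, so replacing the left-hand side with $\min_{s=0,\ldots,S_1-1}\sum_{i=1}^m \|\mat{g}_i^s\|_1$ preserves the inequality. The first term on the right is $O(1/S_1)$ since $f(\bTheta^0)$, $\gamma$, and $\alpha$ are all constants, which gives exactly the desired bound. The only subtlety worth flagging is notational: the left-hand side as written in the corollary statement, $\min_s \|\mat{g}_i^s\|_1$, should be understood as $\min_s \sum_{i=1}^m \|\mat{g}_i^s\|_1$ for the dimensions to match the right-hand side, and the argument above directly delivers that reading.
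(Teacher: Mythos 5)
Your proof is correct and follows essentially the same route as the paper: plug the constant step size into the bound \eqref{eq:signSGD} from Theorem \ref{thm:signSGD}, lower-bound the left-hand side using $\gamma=\min_{i\in[m]} n_i/N_i$, and use the fact that the minimum over $s$ is at most the average over $s$. The notational point you flag is harmless either way—since all summands are nonnegative, the same right-hand side also bounds $\min_s \|\mat{g}_i^s\|_1$ for each fixed $i$ (drop the other terms before taking the minimum), which is the reading the paper uses.
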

\begin{proof}

Equation \eqref{eq:signSGD} implies that 
\begin{eqnarray*}
     \min_{s=0,\ldots,S_1-1} \|\mat{g}_i^s\|_1 &\le& \frac{1}{\gamma}\frac{f(\bTheta^0)}{\sum_{s=0}^{S_1-1} \alpha_s} + \frac{1}{\gamma}\left(\frac{1}{2}r L\sum_{i=1}^m n_i  \frac{\sum_{t=0}^{T_1-1}\alpha_s^2}{\sum_{t=0}^{T_1-1} \alpha_s} + 2r(\sigma_g+\sigma_Q)\sum_{i=1}^m n_i\right) \\
    &=& \frac{f(\bTheta^0)}{S_1 \gamma \alpha} + \frac{1}{\gamma}(\frac{\alpha L}{2}+2\sigma_g+2\sigma_Q)r\sum_{i=1}^m n_i\\
    &=& O(\frac{1}{S_1})+\frac{1}{\gamma}(\frac{\alpha L}{2}+2\sigma_g+2\sigma_Q)r\sum_{i=1}^m n_i.
\end{eqnarray*}

\end{proof}

\subsubsection{Convergence of Mixed-Precision SGD}
In this subsection, we show the locally linear convergence rate of the mixed-precision SGD in Algorithm \ref{alg:SGD}. We make the following extra assumption.
\begin{assumption}\label{assumption:descent}
Assume that for some $\theta\in [0,1)$, it holds 
\[
    \|\mathbb{E}(\texttt{Q}(\tmat{g}_i^s)-\tmat{g}_i^s)\| \le \theta \|\mat{g}_i^s\|, \quad i\in [m], \; s=S_1+1,\ldots,S_2.
\]
\end{assumption}
Assumption \ref{assumption:descent} assumes the quantized stochastic gradient is a good descent direction in expectation. 
If the quantization function $Q$ uses independent stochastic rounding, then Assumption \ref{assumption:descent} is true for $\theta=0$ since $\mathbb{E}(\texttt{Q}(\tmat{g}^s_i))=\bE(\tmat{g}^s_i) = \mat{g}^s_i$. Assumption \ref{assumption:descent} still holds for deterministic rounding as long as the quantization error is not large.

Let $\mat{u}_i:=\mat{U}^{S_1}_i(1,:)\in \re^r$ for $i=2,\ldots,m$. Suppose $
        \cA := [\![\mat{U}_1^*, \big[\mat{u}_2^T;\tmat{U}_2^*\big], \cdots \circ \big[\mat{u}_m^T;\tmat{U}_m^*\big] ]\!]$,
    where $\mat{U}_1^*\in \re^{N_1\times r},\tmat{U}_i^*\in \re^{(N_i-1)\times r},2\le i\le m$. The objective function now becomes $\tilde{f}_A(\tbTheta)$ as in \eqref{eq:TD scale arbitrary}, which is locally $\lambda$-strongly convex by Corollary \ref{cor:convex}. Consequently, the convergence result of SGD for strongly convex functions can be applied.

\begin{theorem}\label{thm:SGD}
    Suppose the tensor $\cA$ satisfies the conditions of Corollary \ref{cor:convex}. Let $\{\bTheta^s\}_{s=S_1}^{S_2}$ be the sequence generated by the mixed-precision SGD in Algorithm \ref{alg:SGD} and $\alpha_s=\alpha$ be the learning rate. If $\tbTheta^s$ is in the set $O$ as in Corollary \ref{cor:convex} and $\bE(\|\texttt{Q}(\tilde{\mat{g}}^s)\|)\le G$ for $s=S_1,\ldots,S_2$, then under Assumption \ref{assumption:bound H}, Assumption \ref{assumption:bound variance}, and Assumption \ref{assumption:descent}, it holds that 
    \[
        \bE(\|f(\bTheta^{S_2})\|^2) \le \frac{\alpha L G}{2\lambda (1-\theta)} + (1-\alpha \lambda (1-\theta))^{S_2-S_1}(f(\bTheta^{S_1})-\frac{\alpha L G}{2\lambda (1-\theta)})
    \]
    where $\lambda$ is the strong convexity parameter in Corollary \ref{cor:convex}.
\end{theorem}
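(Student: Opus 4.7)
\textbf{Proof proposal for Theorem \ref{thm:SGD}.}

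The plan is to follow the standard descent-lemma plus strong-convexity template for SGD, adapted to the biased quantized stochastic gradient. First I would reformulate the update in the normalized coordinates of Corollary \ref{cor:convex}: because lines 13--14 of Algorithm \ref{alg:SGD} freeze $\mat{U}_i(1,:)$ for $i\ge 2$, the iterates $\bTheta^s$ for $s\ge S_1$ can be viewed as iterates $\tbTheta^s$ of the reduced objective $\tilde{f}_A$ from \eqref{eq:TD scale arbitrary}, with vectors $\mat{u}_i=\mat{U}_i^{S_1}(1,:)$. By Corollary \ref{cor:convex}, $\tilde f_A$ is $\lambda$-strongly convex on the open set $O$, with a global minimizer $\tbTheta^\ast$ satisfying $\tilde f_A(\tbTheta^\ast)=0$; and by Assumption \ref{assumption:bound H} it is $L$-smooth on $O$. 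This lets me work with the standard inequalities
\[
\tilde f_A(\tbTheta^{s+1}) \le \tilde f_A(\tbTheta^s) + \langle \mat{g}^s,\tbTheta^{s+1}-\tbTheta^s\rangle + \tfrac{L}{2}\|\tbTheta^{s+1}-\tbTheta^s\|^2,
\qquad \|\mat{g}^s\|^2 \ge 2\lambda\,\tilde f_A(\tbTheta^s),
\]
where $\mat{g}^s$ is the stacked gradient over the free variables.

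Next I would substitute the update $\tbTheta^{s+1}=\tbTheta^s-\alpha\,\texttt{Q}(\tilde{\mat{g}}^s)$ and take conditional expectation. Splitting $\texttt{Q}(\tilde{\mat{g}}^s)=\mat{g}^s+(\texttt{Q}(\tilde{\mat{g}}^s)-\tilde{\mat{g}}^s)+(\tilde{\mat{g}}^s-\mat{g}^s)$ and using $\bE[\tilde{\mat{g}}^s]=\mat{g}^s$ together with Cauchy--Schwarz and Assumption \ref{assumption:descent}, I get
\[
\bE\langle \mat{g}^s,\texttt{Q}(\tilde{\mat{g}}^s)\rangle \;\ge\; \|\mat{g}^s\|^2 - \|\mat{g}^s\|\cdot\bigl\|\bE[\texttt{Q}(\tilde{\mat{g}}^s)-\tilde{\mat{g}}^s]\bigr\| \;\ge\; (1-\theta)\|\mat{g}^s\|^2.
\]
For the quadratic term I would absorb $\bE[\|\texttt{Q}(\tilde{\mat{g}}^s)\|^2]$ using the bound $\bE\|\texttt{Q}(\tilde{\mat{g}}^s)\|\le G$ (interpreted so as to control the second moment, since the statement only uses $G$ as a single scalar in the final bound). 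Combined with the descent lemma and strong convexity $\|\mat{g}^s\|^2 \ge 2\lambda\,\tilde f_A(\tbTheta^s)$, this yields a one-step contraction of the form
\[
\bE[\tilde f_A(\tbTheta^{s+1})] \;\le\; \bigl(1-\alpha\lambda(1-\theta)\bigr)\bE[\tilde f_A(\tbTheta^s)] + \tfrac{\alpha^2 L G}{2}.
\]

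Finally I would unroll this linear recursion: subtracting the fixed point $\phi:=\tfrac{\alpha L G}{2\lambda(1-\theta)}$ from both sides gives $\bE[\tilde f_A(\tbTheta^{s+1})]-\phi \le (1-\alpha\lambda(1-\theta))(\bE[\tilde f_A(\tbTheta^s)]-\phi)$, and iterating from $s=S_1$ to $S_2-1$ produces exactly the stated bound, with $f(\bTheta^{S_1})$ appearing on the right (since $f$ and $\tilde f_A$ agree on the trajectory). The main obstacle I expect is controlling the biased gradient direction cleanly: one has to make sure that the interaction between the stochastic error $\tilde{\mat{g}}^s-\mat{g}^s$ (which is zero-mean) and the deterministic quantization bias $\bE[\texttt{Q}(\tilde{\mat{g}}^s)-\tilde{\mat{g}}^s]$ (which is not) can be simultaneously absorbed into the single contraction factor $(1-\theta)$. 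A secondary subtlety is that the bound is only meaningful while $\tbTheta^s$ stays in $O$; the theorem handles this by hypothesis, but in practice one would also need that the contraction keeps the iterates inside $O$, which follows once the learning rate $\alpha$ is chosen small enough relative to the radius of $O$ and the noise floor $\phi$.
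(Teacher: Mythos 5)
Your proposal is correct and follows essentially the same route as the paper: you establish the same key inequality $\bE\langle \mat{g}_i^s,\texttt{Q}(\tilde{\mat{g}}_i^s)\rangle \ge (1-\theta)\|\mat{g}_i^s\|^2$ from Assumption \ref{assumption:descent}, reduce to the $\lambda$-strongly convex normalized objective $\tilde{f}_A$ via Corollary \ref{cor:convex}, and then run the standard fixed-stepsize SGD contraction argument. The only difference is presentational: where you unroll the descent-lemma/strong-convexity recursion explicitly, the paper delegates exactly that step to Theorem 4.6 of Bottou, Curtis, and Nocedal (2018), so your write-up is in effect an inlined proof of the cited result (including the same mild looseness of using the first-moment bound $G$ where a second-moment bound is what the recursion actually needs).
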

\begin{proof}
     Under Assumption \ref{assumption:descent}, the gradient $\tmat{g}_i^s$ satisfies
\begin{eqnarray*}
    \bE((\mat{g}_i^s)^T \texttt{Q}(\tmat{g}_i^s)) &=& (\mat{g}_i^s)^T\bE(\tmat{g}_i^s) + (\mat{g}_i^s)^T\bE(\texttt{Q}(\tmat{g}_i^s-\tmat{g}_i^s)\\
    &\ge&  \|\mat{g}_i^s\|^2 - \|\mat{g}_i^s\| \|\bE(\texttt{Q}(\tmat{g}_i^s-\tmat{g}_i^s) \| \\
    &\ge& (1-\theta)  \|\mat{g}_i^s\|^2.
\end{eqnarray*}
    By Corollary \ref{cor:convex}, the function $\tilde{f}_A(\tbTheta)$ is $\lambda$-strongly convex in $O$ containing $\tbTheta^*$. Algorithm \ref{alg:SGD} is minimizing the function $\tilde{f}_A(\tbTheta)$ after switching to mixed-precision SGD. It also holds that $f(\bTheta^{S_2})=f_A(\tbTheta^{S_2})$. Therefore, the result is a direct conclusion of Theorem 4.6 in \cite{bottou2018optimization}.
\end{proof}

\section{Numerical Experiments}\label{sec:tests}

\subsection{Implementation Details}
Recall that the block stochastic gradient is computed as in Algorithm \ref{alg:grad}. The quantization function $\texttt{Q}_1$ use \texttt{FP}$_{16}$, scale factor $\delta=1$, and deterministic rounding, i.e.,
$
  \texttt{Q}_1 = \texttt{Q}^D_{\text{\texttt{FP}$_{16}$},1}.
$
The quantization function $\texttt{Q}_2$ use \texttt{INT}$_b$ precision and deterministic rounding. When quantizing the matrix $\mat{X}$, we use the scale factor $\delta=\frac{\max |\mat{X}|}{c}$, where $c\ge 2^{b-1}-1$. Specifically, we use $c=10,30,200$ for \texttt{INT}$_2$, \texttt{INT}$_4$, and \texttt{INT}$_8$ respectively. In this section, the precision of Algorithm \ref{alg:SGD} always means the precision of $\texttt{Q}_2$.

Our implementation uses the Python package CuPy \cite{cupy_learningsys2017}. For fair comparisons between different precisions, we implement the matrix multiplication by CUTLASS kernels \cite{Kerr_CUTLASS_2022}. However, due to the lack of support for extremely low-bit fixed-point integer representations in Python, we only compare the running time between \texttt{INT}$_8$ and \texttt{FP}$_{32}$ on GPU. The learning rate for the mixed-precision SignSGD in Algorithm \ref{alg:SGD} is set as $0.5$ initially and is updated as $\alpha = 0.3\alpha$ every 1000 iterations. The mixed-precision SGD stage uses the constant learning rate $\alpha=0.01$. For tensors of orders $3,4,5$, we use the sample sizes $|\cI_i|=0.2N_i,0.3N_i,0.4N_i$ respectively. The size of the sampled sub-tensor is roughly $1\%$ of the original tensor.

Suppose that Algorithm \ref{alg:SGD} outputs the factor matrices $\{\mat{U}_i\}_{i=1}^m$ for the input tensor $\cA$. We use a relative error to measure the qualify of our results, which is defined as 
\[
    \text{error} = \frac{\|\cA - [\![\mat{U}_1,\cdots,\mat{U}_m]\!]\|_{\rm F}}{\|\cA\|_{\rm F}}.
\]

\begin{figure}[t]
     \centering
     \subfloat[Initialization with max $1.0$]{\label{fig:initial_1}\includegraphics[width=0.45\textwidth]{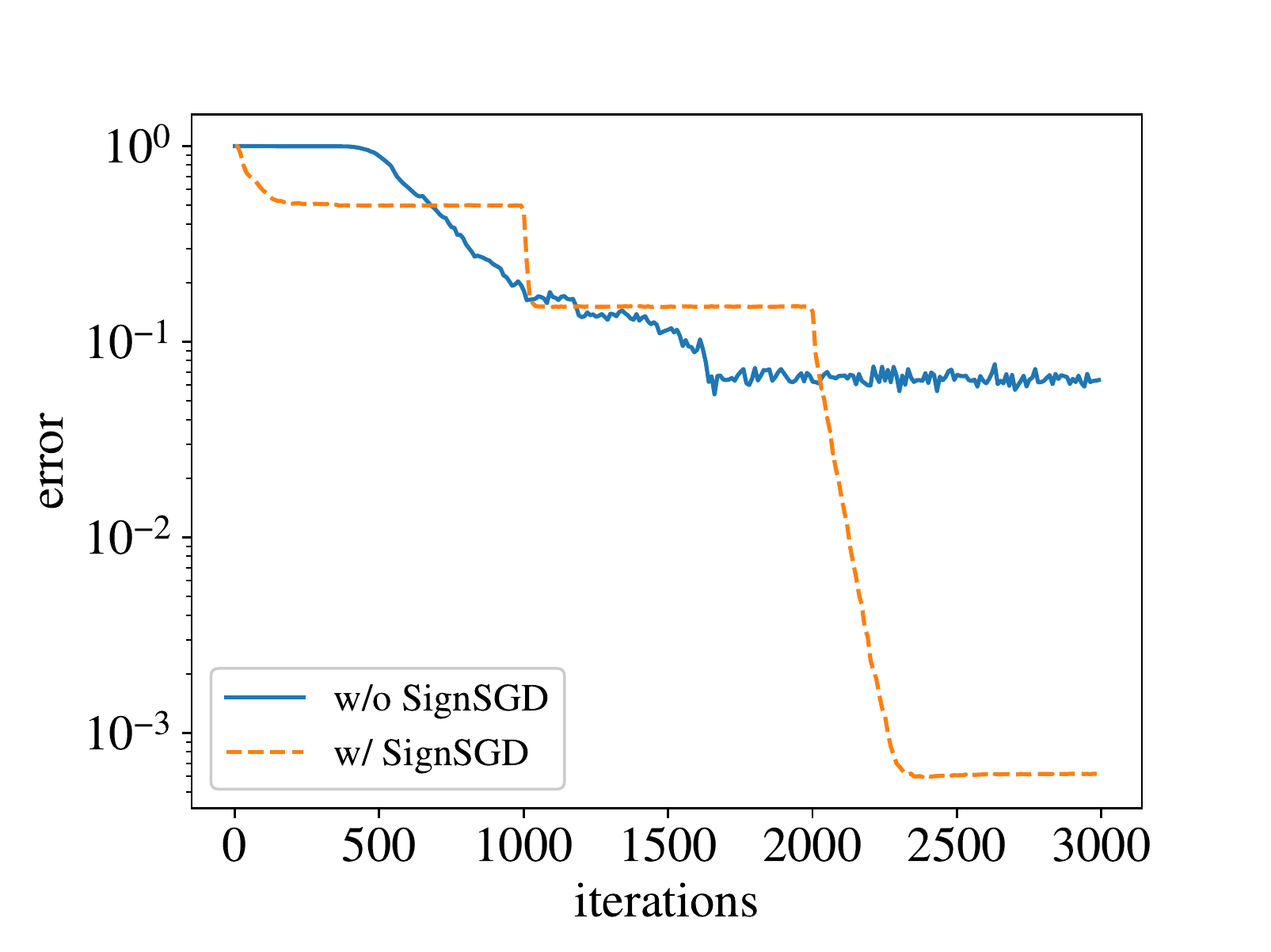}} 
     \subfloat[Initialization with max $0.1$ and $0.01$]{\label{fig:initial_0.1}\includegraphics[width=0.45\textwidth]{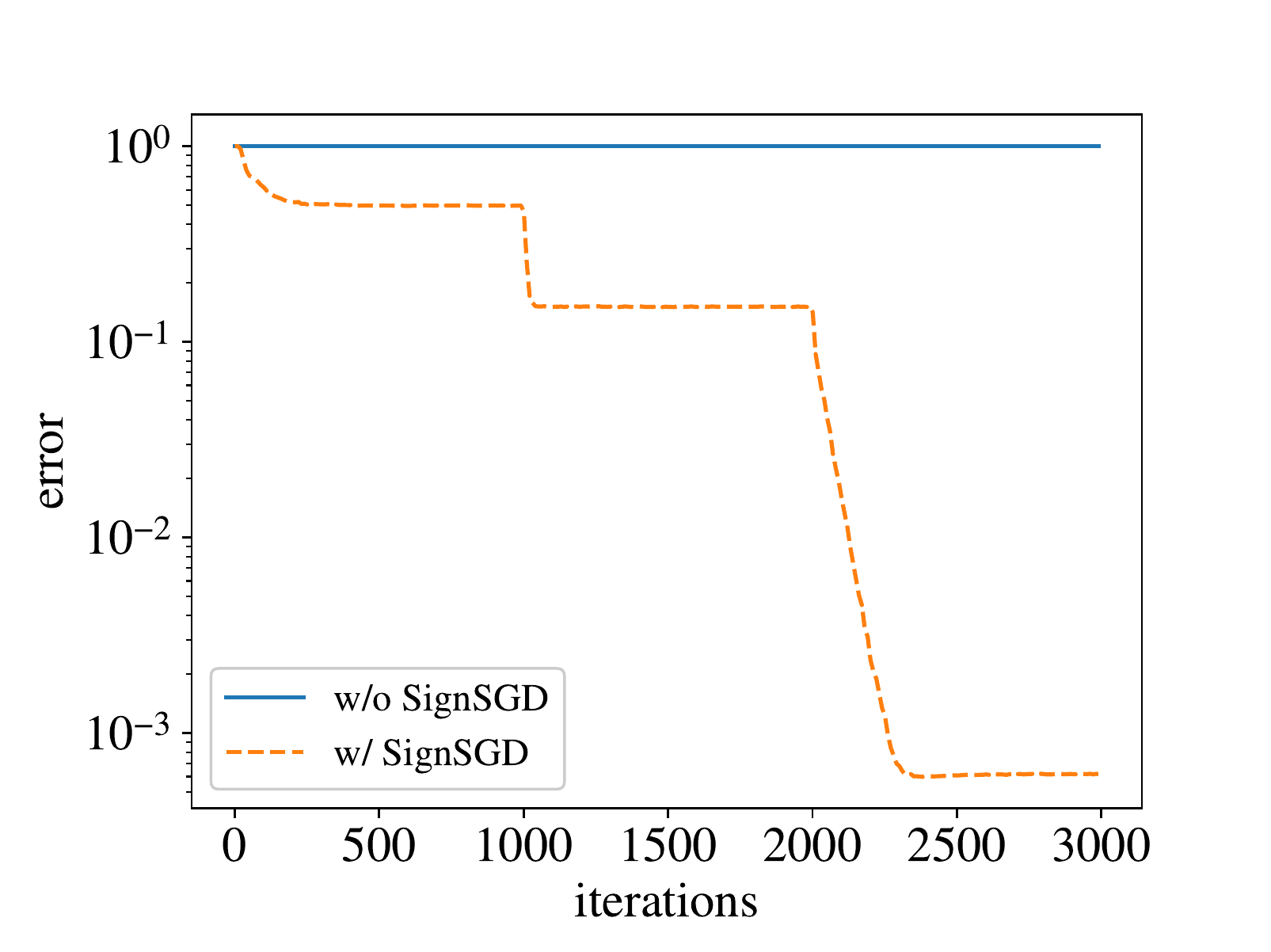}} 
\caption{Performance of Algorithm \ref{alg:SGD} with and without SignSGD initialization.}
\label{figure:initial compare}
\end{figure}

\subsection{Synthetic Examples}
We first test the runtime and convergence of Algorithm \ref{alg:SGD} under various precisions on some synthetic tensor benchmarks. 

\subsubsection{Role of SignSGD Initialization}
This section runs the experiment in full precision to show the influence of SignSGD initialization to the convergence of the whole algorithm. The results with different initialization methods are shown in Figure \ref{figure:initial compare}. The ``max" in Figure \ref{figure:initial compare} is the maximum absolute value of each $\{\mat{U}_i\}_{i=1}^m$. Algorithm \ref{alg:SGD} without SignSGD is trapped by a stationary point and fails to converge with $\max=1.0$ as shown in Figure \ref{fig:initial_1}. After we decrease max to $0.1$ and $0.01$, Algorithm \ref{alg:SGD} without SignSGD stays at zero, which is a stationary point. In contrast, Algorithm \ref{alg:SGD} with SignSGD converges well for max=$1.0,0.1,0.01$. The result demonstrates that the SignSGD initialization can greatly improve the convergence of Algorithm \ref{alg:SGD}.


\subsubsection{Time Comparison in Different Precisions}
We test the runtime of Algorithm \ref{alg:SGD} to reach the same relative error $10^{-3}$ under different precisions. We specifically compare the runtime of Algorithm \ref{alg:SGD} with \texttt{INT}$_8$ and \texttt{FP}$_{32}$ respectively, and the result is summarized in Table \ref{tab:time comparison}. Figure \ref{fig:size 1200}, \ref{fig:size 200}, \ref{fig:size 60} show that the runtime increases linearly as the tensor rank increases for both low-precision and full-precision. The reduction ratio remains the same as the rank changes. We can observe significant time savings when using the \texttt{INT}$_8$ format for all sizes, ranks, and orders. The time saving is also more remarkable as the tensor order increases. This is because $m$ large-size matrix multiplications are computed in low precision in Algorithm \ref{alg:SGD} for tensors with order $m$. Therefore, higher order $m$ brings in more time savings. The detailed complexity analysis is in Section \ref{sec:mixed-precision gradient}. 

\begin{table}[t]
    \centering
    \begin{tabular}{|c|c|c|c|c|c|}
    \hline
        Dimension &  Sample size & Rank & \texttt{INT}$_8$ time (s) & \texttt{FP}$_{32}$ time (s)& Speed up \\
        \hline
        (1200,1200,1200) & (240,240,240)&256 & 17.96 &28.15 & \textbf{1.56}$\times$\\
        \hline
        (200,200,200,200) & (60,60,60,60)&64 & 29.90 &56.44 & \textbf{1.88}$\times$\\
        \hline
        (60,60,60,60,60) & (24,24,24,24,24)& 32 & 41.71 &114.89 & \textbf{2.75}$\times$\\ \hline
    \end{tabular}
    \caption{Time comparison between \texttt{FP}$_{32}$ and \texttt{INT}$_8$ of Algorithm \ref{alg:SGD} for various dimensions}
    \label{tab:time comparison}
\end{table}

\begin{figure}[t]
     \centering
     \subfloat[Size (1200,1200,1200)]{\label{fig:size 1200}\includegraphics[width=0.33\textwidth]{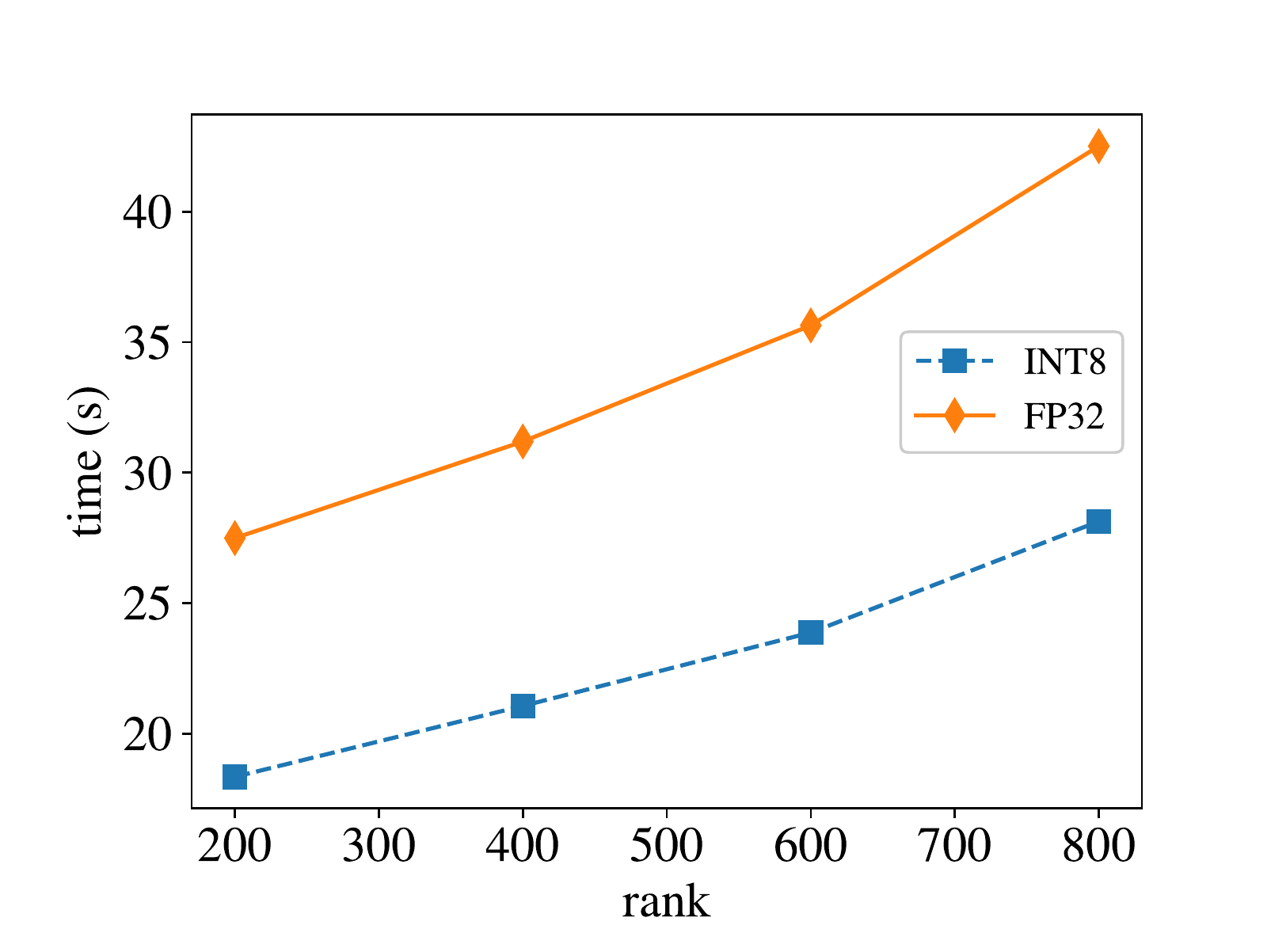}}
    \subfloat[Size (200,200,200,200)]{\label{fig:size 200}\includegraphics[width=0.33\textwidth]{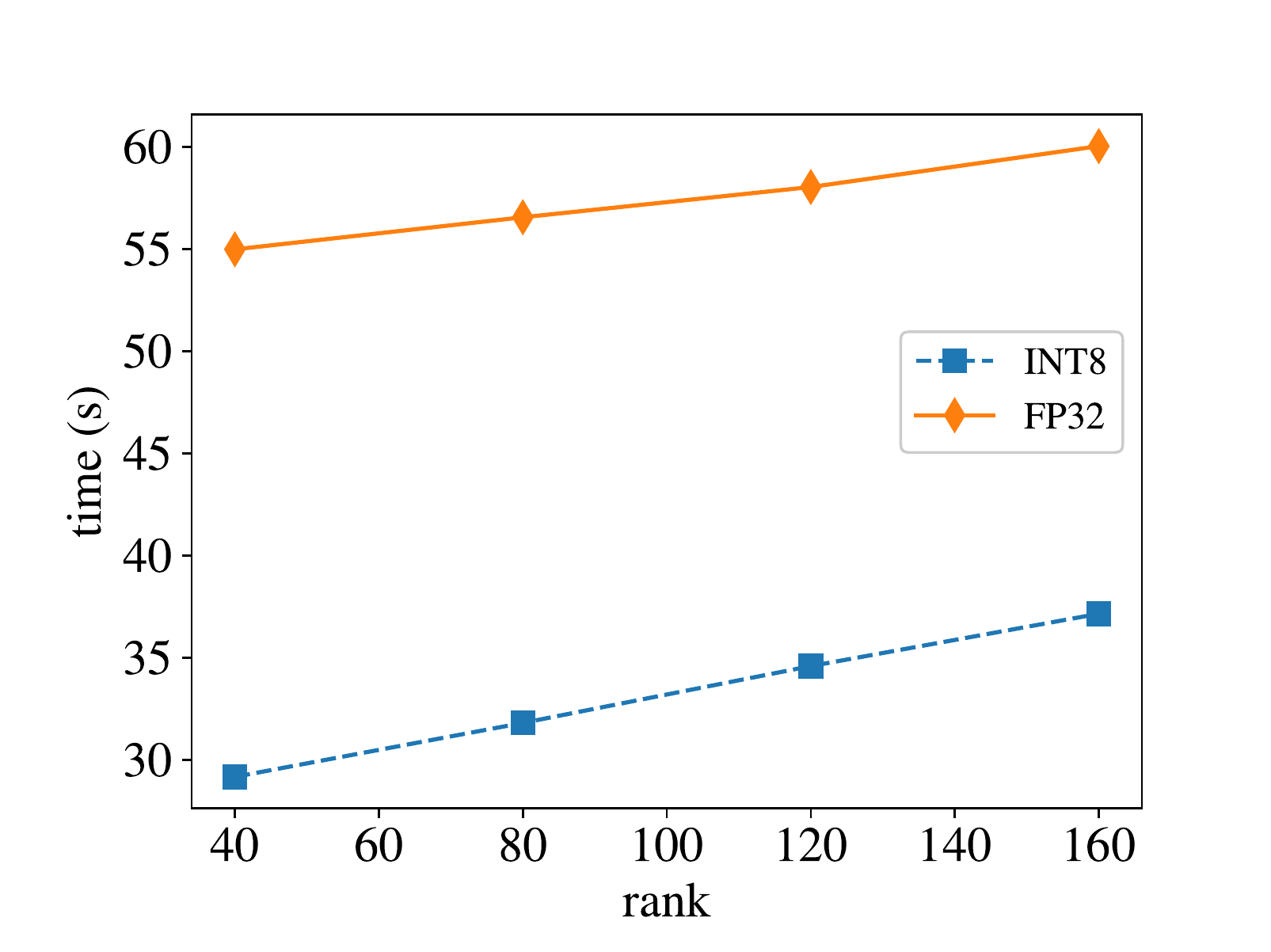}}
    \subfloat[Size (60,60,60,60,60)]{\label{fig:size 60}\includegraphics[width=0.33\textwidth]{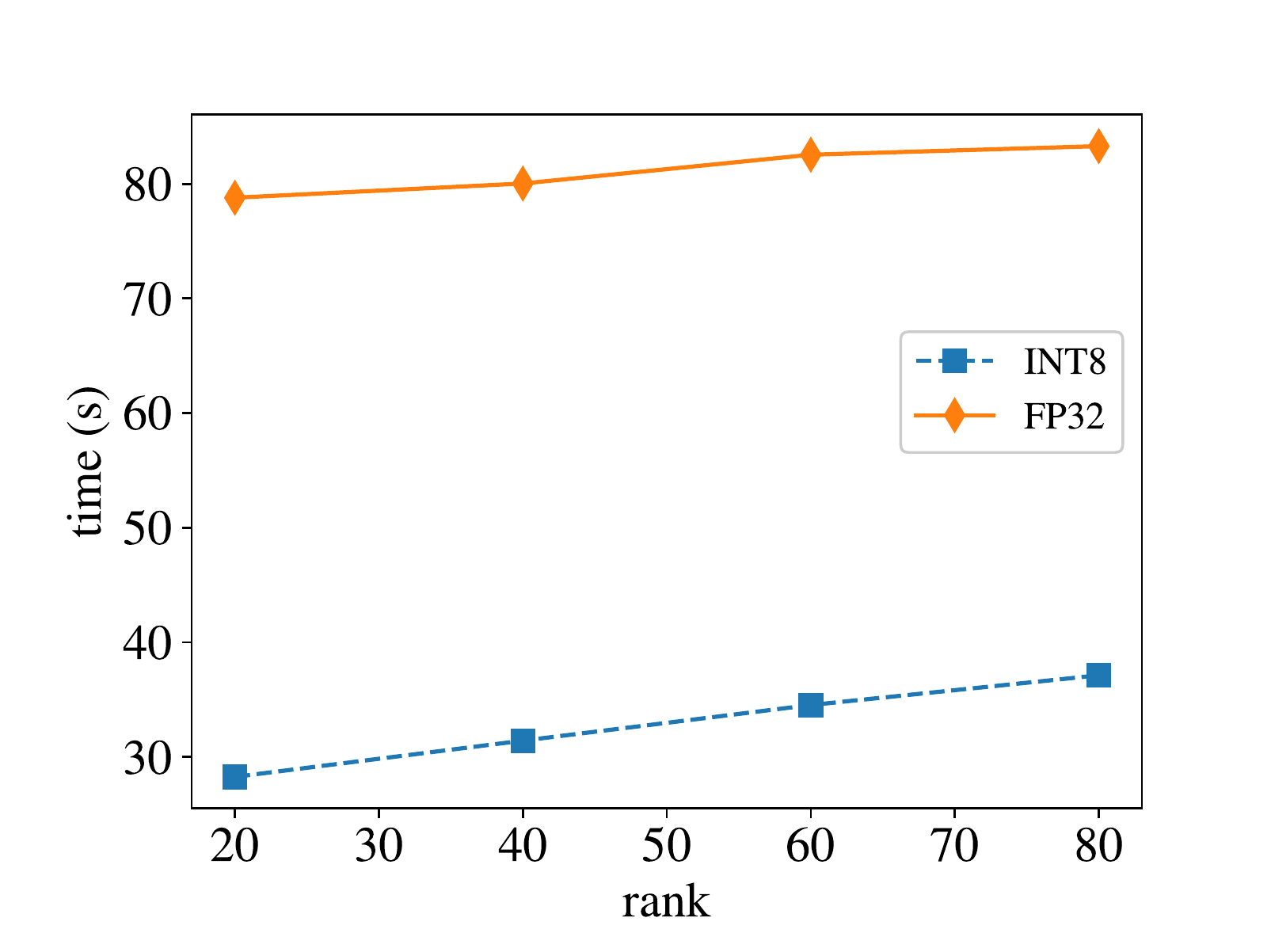}}
\caption{Time comparison between \texttt{FP}$_{32}$ and \texttt{INT}$_8$ of Algorithm \ref{alg:SGD} for various ranks}
\label{figure:time compare}
\end{figure}

\subsubsection{Convergence Comparison in Different Precisions}
We further evaluate the convergence of Algorithm \ref{alg:SGD} under various precisions. We compare precisions \texttt{INT}$_2$, \texttt{INT}$_4$, \texttt{INT}$_8$, and \texttt{FP}$_{32}$, where the computation of \texttt{INT}$_2$ and \texttt{INT}$_4$ is simulated by \texttt{FP}$_{32}$. The simulation simply rounds the scaled number into the nearest integer and then clamps it into the representation range. The convergence of \texttt{INT}$_4$ and \texttt{INT}$_8$ precision are almost the same as the convergence of \texttt{FP}$_{32}$, so they are combined in Figure \ref{figure:precision compare}. The final relative error of low-precision Algorithm \ref{alg:SGD} is slightly worse than the full-precision version due to the quantization error. The quantization error also causes the slow convergence for rank 200 and the divergence for higher ranks of \texttt{INT}$_2$ precision. The noisy ball term in Corollary \ref{cor:signSGD} for SignSGD depends on the rank $r$ and the quantization error $\sigma_Q$. Therefore, a large rank $r$ and large quantization error $\sigma_Q$ may lead to bad convergence due to the large noisy ball. The mixed-precision SGD part starts at around the 2000th iteration. Figure \ref{figure:precision compare} shows that the mixed-precision SGD has a linear convergence rate which matches the theoretical result in Theorem \ref{thm:SGD}. The slower convergence of the mixed-precision SGD part of \texttt{INT}$_2$ precision in Figure \ref{fig:rank200} is caused by the large $\theta$ in Assumption \ref{assumption:descent} due to the quantization error.

\begin{figure}[t]
     \centering
     \subfloat[rank 200]{\label{fig:rank200}\includegraphics[width=0.4\textwidth]{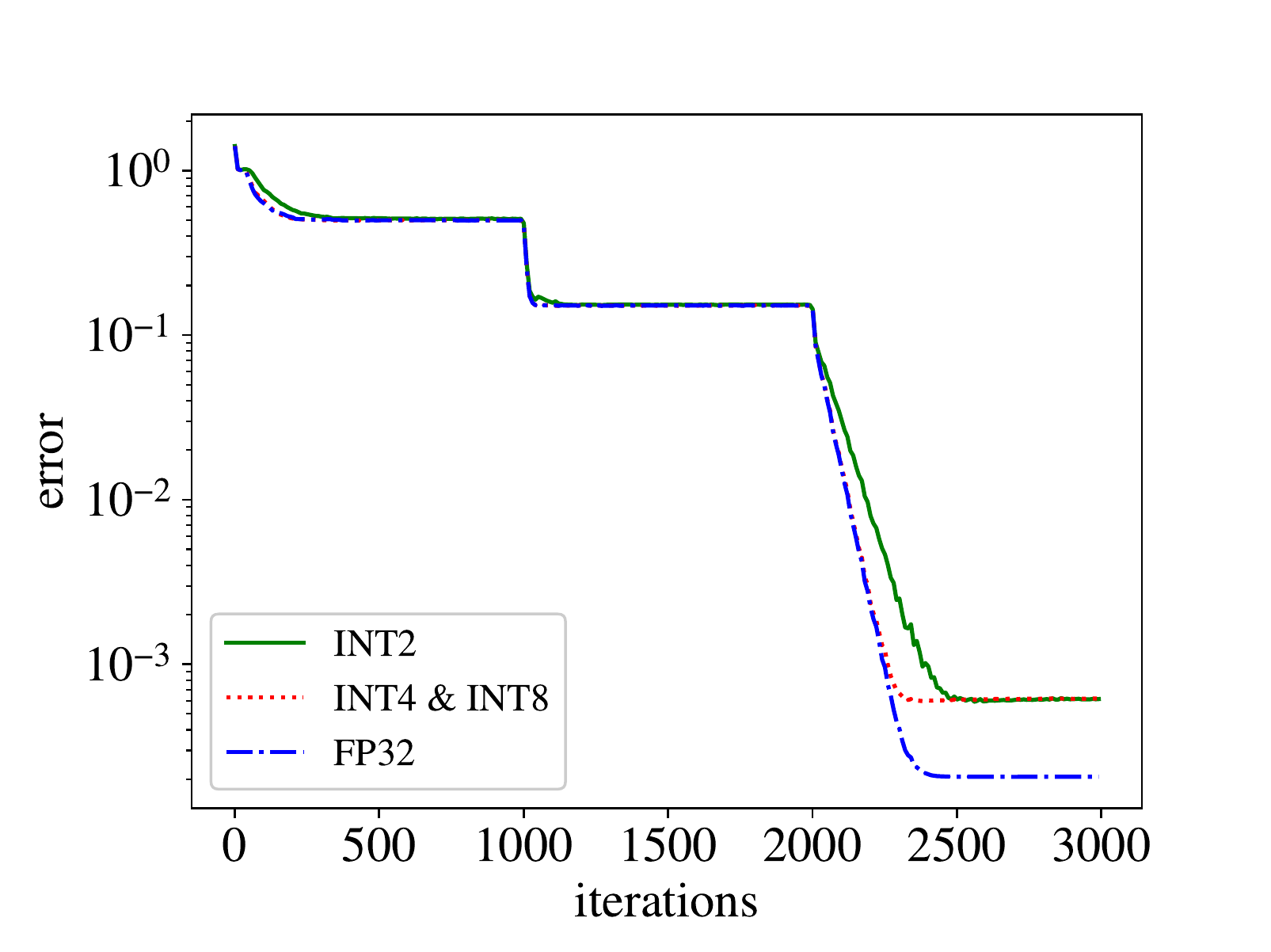}}
    \subfloat[rank 400]{\label{fig:rank400}\includegraphics[width=0.4\textwidth]{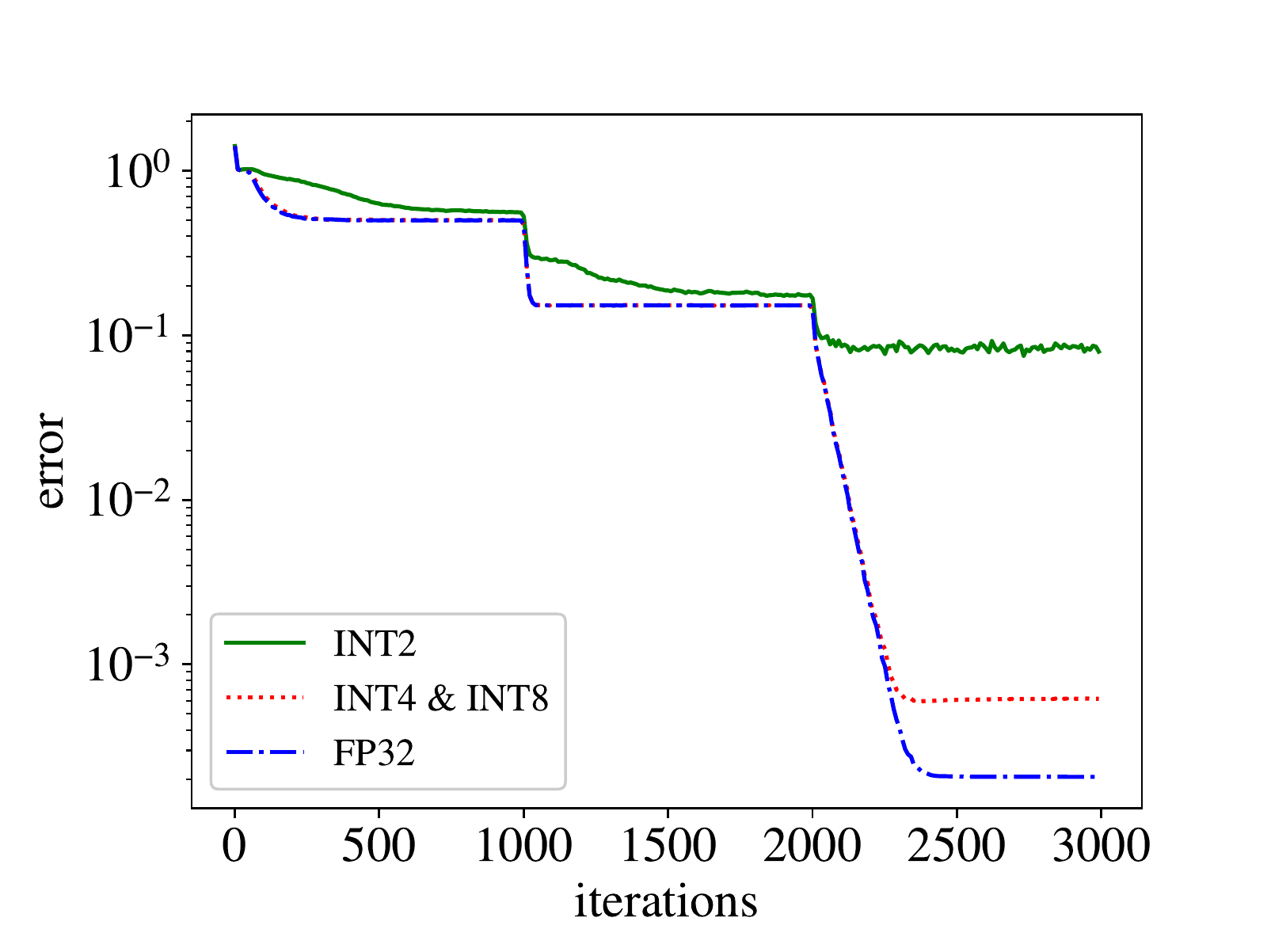}} \\
    \subfloat[rank 600]{\label{fig:rank600}\includegraphics[width=0.4\textwidth]{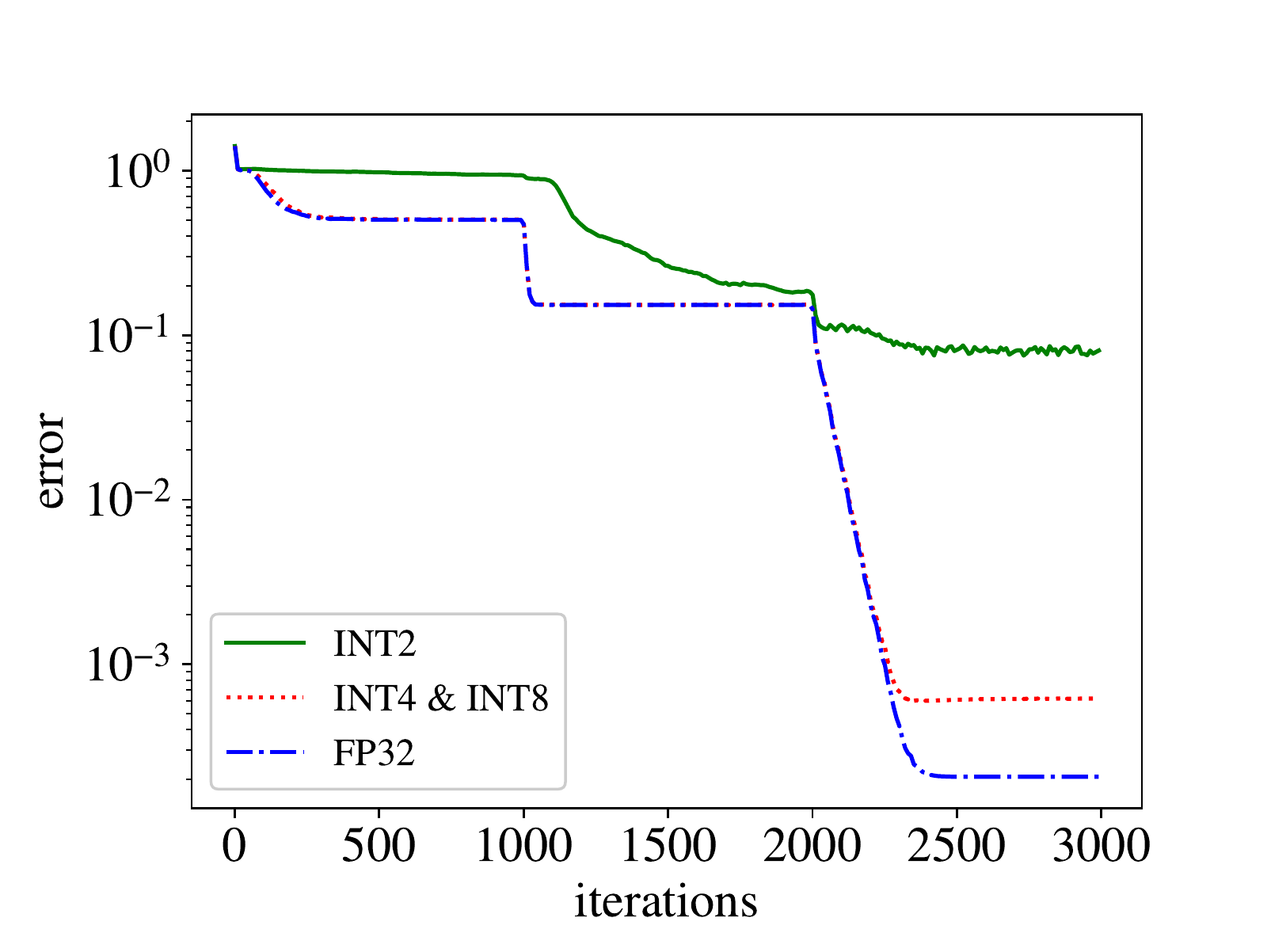}}
    \subfloat[rank 800]{\label{fig:rank800}\includegraphics[width=0.4\textwidth]{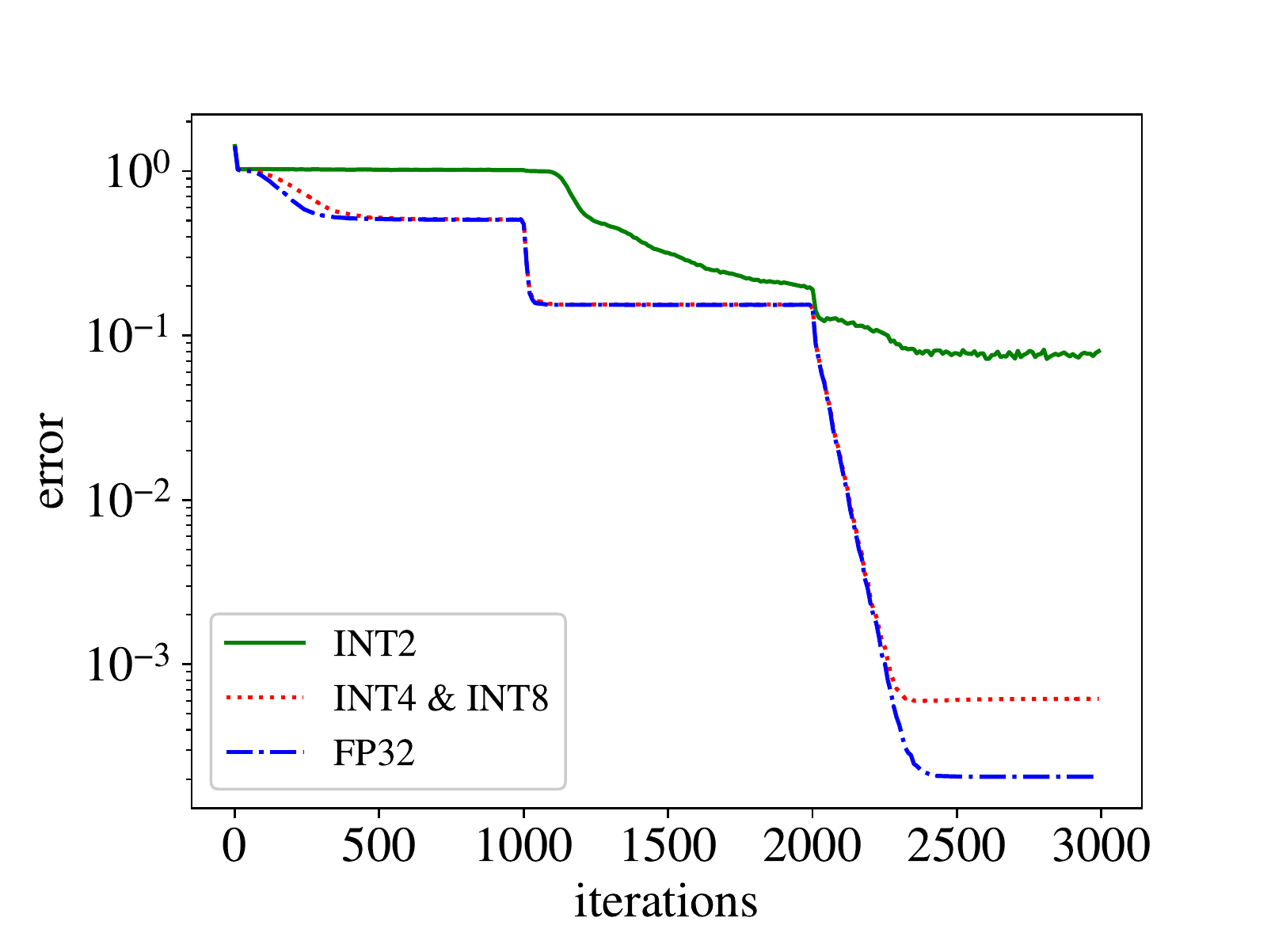}}
\caption{Convergence comparison of various precision on the tensor of dimension (1200,1200,1200).}
\label{figure:precision compare}
\end{figure}

\subsection{Real Datasets}

\subsubsection{Coil-100 Dataset}
The Coil-100 dataset \cite{nene1996columbia} contains the images of 100 objects in 72 different poses. Each image has size $128\times 128\times 3$, where $128\times 128$ is the number of pixels and $3$ represents the $3$ RGB channels. Thus, the size of the formed tensor $\ten{A}$ is $128\times 128\times 3\times 7200$. The CP decomposition is applied for dimension reduction. The fourth-factor matrix $\mat{U}_4 \in \re^{7200\times r}$ can be used as features for clustering and classification tasks.

We run Algorithm \ref{alg:SGD} on the tensor $\ten{A}$ with precisions \texttt{INT}$_2$, \texttt{INT}$_4$, \texttt{INT}$_8$, and \texttt{FP}$_{32}$. The test employs rank $r=16$ and sample size $(32,32,3,1440)$. As shown in Figure \ref{fig:coil_100}, the convergence trends are similar for all precisions. Higher numerical precisions produce smaller relative errors in the final solution, but the difference is insignificant. All of our relative errors are better than the best reported result in \cite{battaglino2018practical}, which is $0.314$. Regarding the running time, \texttt{INT}$_8$ takes 11 seconds for 1000 iterations while \texttt{FP}$_{32}$ takes 31 seconds. Algorithm \ref{alg:SGD} of the \texttt{INT}$_8$ precision is \textbf{2.8} times faster. The experiment demonstrates that our proposed mixed-precision algorithm can effectively reduce the computation cost of CP tensor decomposition on real-world datasets with negligible accuracy loss.

         

\begin{figure}[t]
    \centering
    \includegraphics[width=0.6\textwidth]{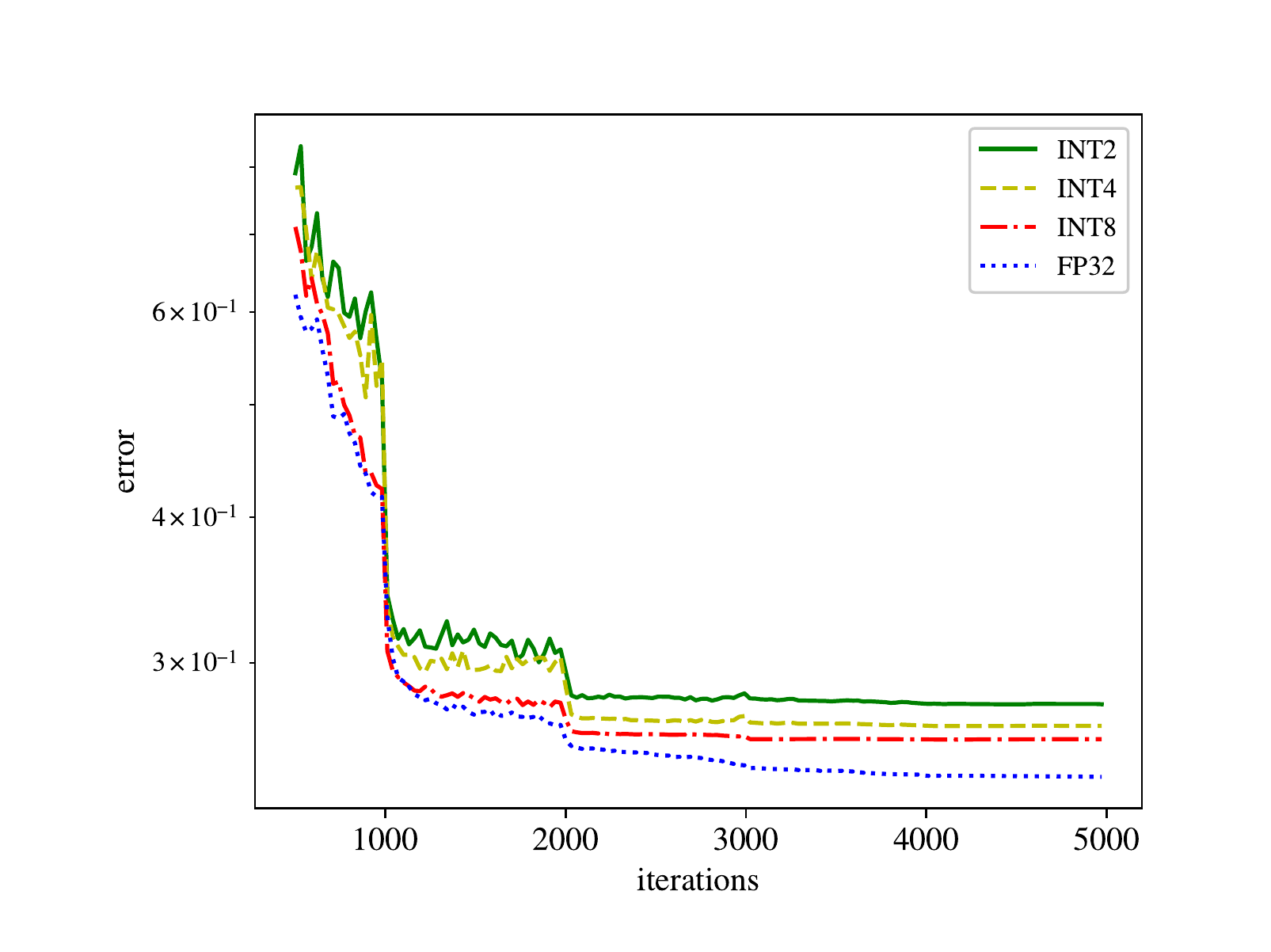}
    \caption{Converge curves of Algorithm \ref{alg:SGD} in various precisions on Coil-100 data}
    \label{fig:coil_100}
\end{figure}

\subsubsection{MRI Dataset}
Magnetic Resonance Imaging (MRI) is widely used in brain science and clinic diagnosis. Low-rank tensor decomposition can be applied to denoise practical MR images \cite{yaman2019low}. This experiment uses the data from the NYU fastMRI Initiative database \cite{knoll2020fastmri,zbontar2018fastmri}. The original data is in a Fourier space and forms a complex tensor $\ten{K}$ of size $16\times 640\times 320$. The tensor $\ten{K}$ in real-world is typically corrupted by noises. In our test, we intentionally corrupt $\ten{K}$ by the noise tensor $\ten{N}$. The real part and imaginary part of $\ten{N}$ both obey the normal distribution with mean $0$ and variance $\tau^2$. Let the corrupted tensor be $\widehat{\ten{K}}=\ten{K}+\ten{N}$, then the inverse Fourier transform is applied to $\widehat{\ten{K}}$ to get $\widehat{\cA}\in \mathbb{C}^{16\times 640\times 320}$. Next, we use Algorithm \ref{alg:SGD} to find a low-rank approximation $\cA$ of the noisy tensor $\widehat{\cA}$ for noise removal. Finally, the gray-scale image $\mat{M}\in \re^{640\times 320}$ is reconstructed as 
\[
    \mat{M}_{i,j} = \sqrt{\sum_{k=1}^{16} |\ten{A}(k,i,j)|^2}.
\]
The image $\mat{M}$ is further cropped into size $320\times 320$ by selecting $\mat{M}(161:480,:)$.
Let $\mat{M}_\text{truth}$ be the ground-truth of the image, then the relative error is computed as 
\[
    \text{error}=\frac{\|\mat{M}-\mat{M}_\text{truth}\|}{\|\mat{M}_\text{truth}\|}.
\]
This experiment applied Algorithm \ref{alg:SGD} to the complex tensors. We would like to remark that Algorithm \ref{alg:SGD} is designed for real tensors, but we can extend the algorithm to complex tensors by considering the real part and the imaginary part separately. 

\begin{table}[t]
        \centering
        \begin{tabular}{|c|c|c|c|c|}
        \hline
          $\tau$  & w/o CP & \texttt{INT}$_4$ & \texttt{INT}$_8$ & \texttt{FP}$_{32}$ \\
          \hline
          1.0e-5 & 0.379 &0.115 & 0.109 & 0.109\\
          \hline
          1.5e-5 & 0.639&0.173 & 0.164 & 0.165 \\
          \hline
          2.0e-5 & 0.913& 0.246 & 0.236 & 0.236 \\ \hline
        \end{tabular}
        \caption{Relative errors before and after removing noises of MRI by Algorithm \ref{alg:SGD}}
        \label{tab:MRI error}
      \end{table}

\captionsetup[subfloat]{labelformat=empty}
\begin{figure}[t]
     \centering
     \rotatebox[origin=l]{90}{\tiny{Noise level $\tau$: $1.0e-5$}}
     \subfloat{\includegraphics[width=0.22\textwidth]{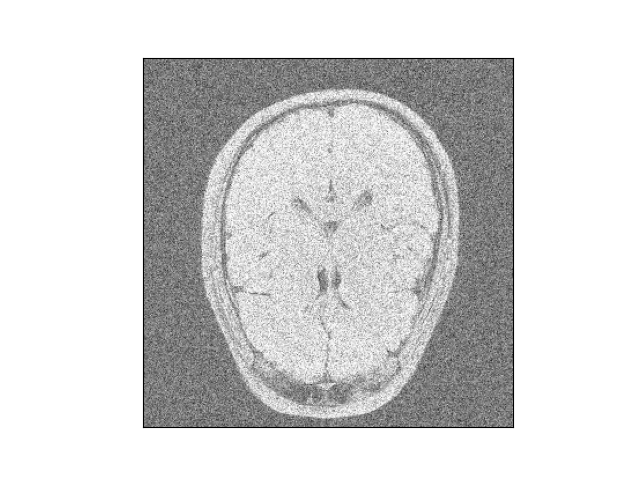}}
    \subfloat{\includegraphics[width=0.22\textwidth]{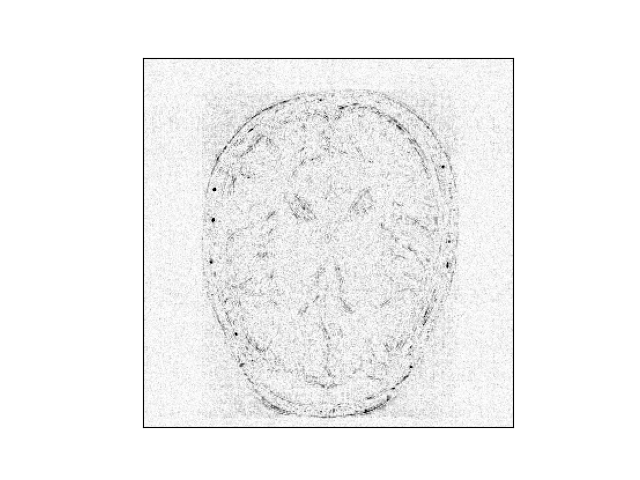}} 
    \subfloat{\includegraphics[width=0.22\textwidth]{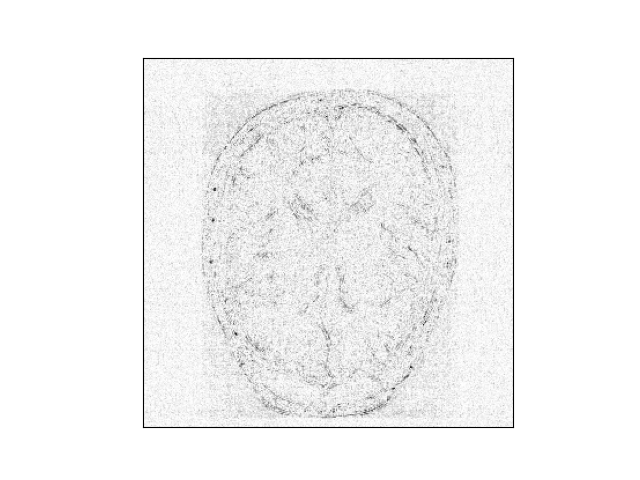}} 
    \subfloat{\includegraphics[width=0.22\textwidth]{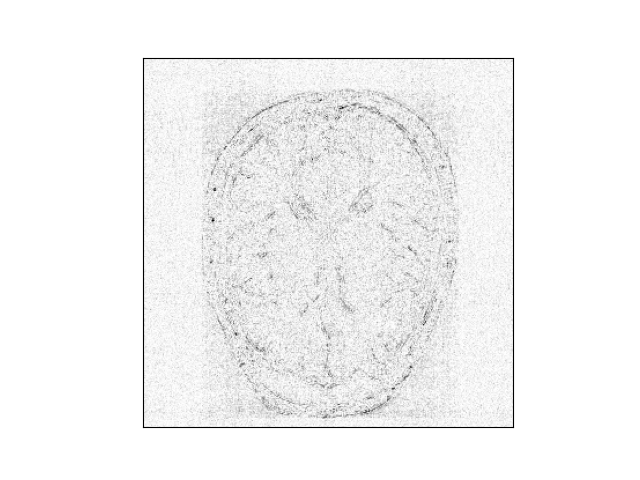}} \\
    \rotatebox[origin=l]{90}{\tiny{Noise level $\tau$: $1.5e-5$}}
     \subfloat{\includegraphics[width=0.22\textwidth]{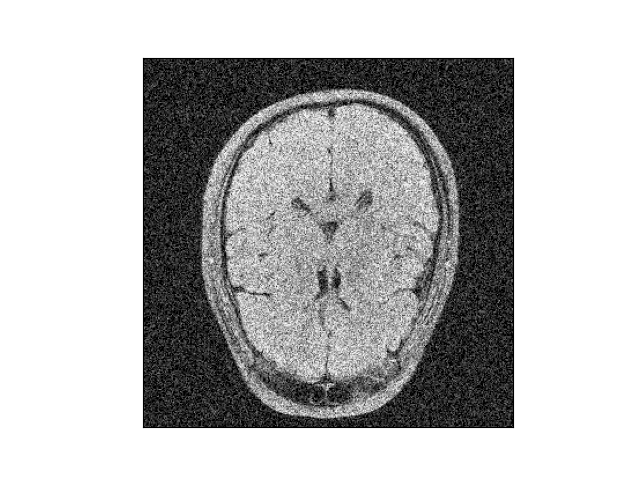}}
    \subfloat{\includegraphics[width=0.22\textwidth]{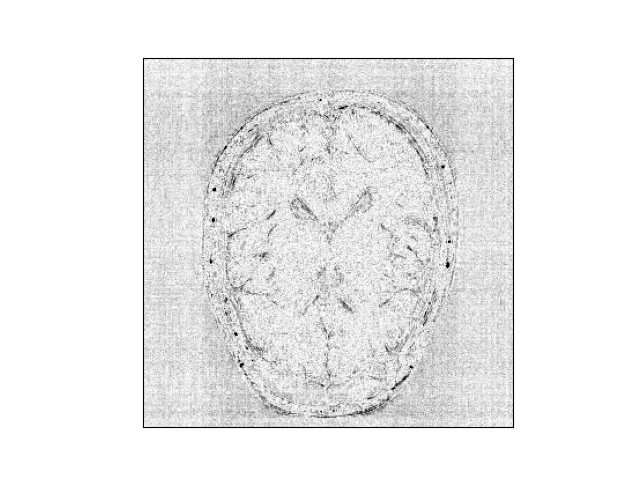}} 
    \subfloat{\includegraphics[width=0.22\textwidth]{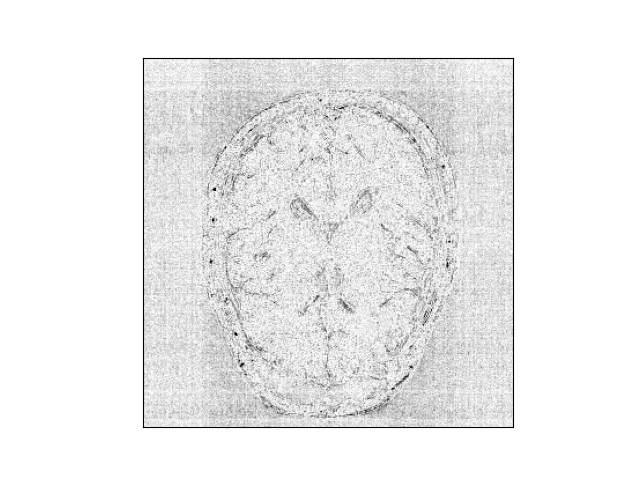}} 
    \subfloat{\includegraphics[width=0.22\textwidth]{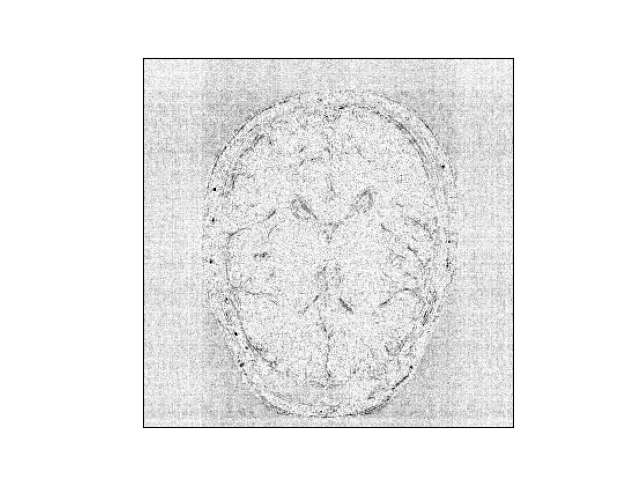}}
    \\
    \rotatebox[origin=l]{90}{\tiny{Noise level $\tau$: $2.0e-5$}}
     \subfloat[w/o CP]{\includegraphics[width=0.22\textwidth]{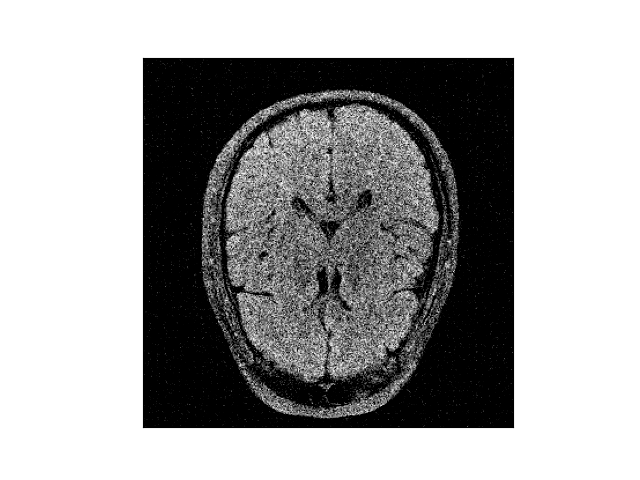}}
    \subfloat[$\texttt{INT}_4$]{\includegraphics[width=0.22\textwidth]{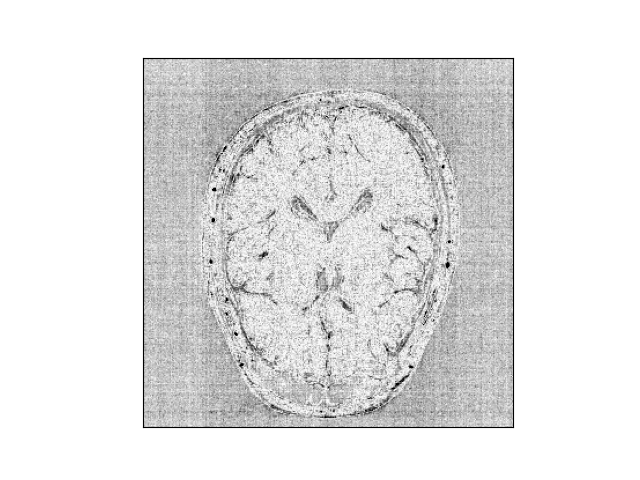}} 
    \subfloat[$\texttt{INT}_8$]{\includegraphics[width=0.22\textwidth]{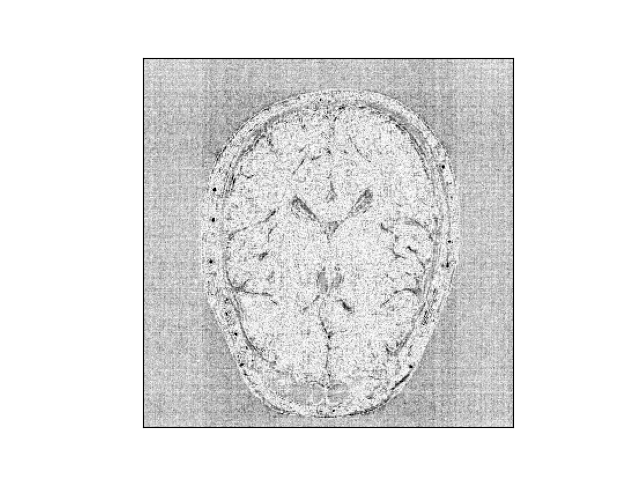}} 
    \subfloat[\texttt{FP}$_{32}$]{\includegraphics[width=0.22\textwidth]{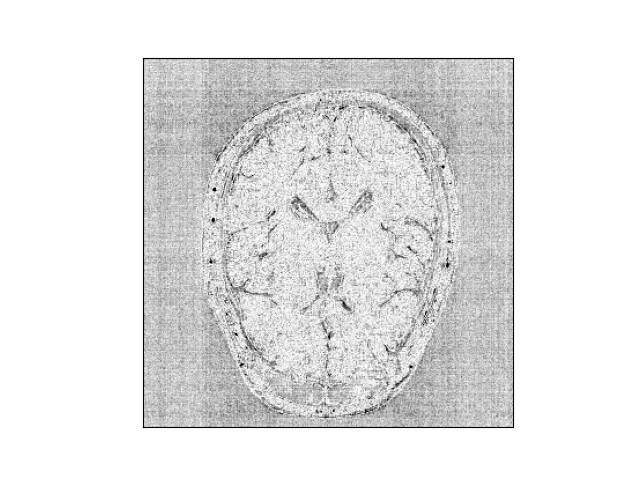}}

\caption{Error images before and after removing noises of MRI by Algorithm \ref{alg:SGD}}
\label{fig:MRI error}
\end{figure}
\captionsetup[subfloat]{labelformat=parens}

Our experiment uses rank $r=200$, sample size $(16,64,64)$ and noise tensors $\ten{N}$ of standard deviations $\tau=1.0\times 10^{-5},1.5\times 10^{-5},2.0 \times 10^{-5}$, respectively. We test the performance of Algorithm \ref{alg:SGD} in various precisions. The test results are presented in Table \ref{tab:MRI error} and Figure \ref{fig:MRI error}. Table \ref{tab:MRI error} lists the relative errors before and after CP decompositions. Algorithm \ref{alg:SGD} successfully removes the noises and reduces the errors. Moreover, the performance of Algorithm \ref{alg:SGD} in \texttt{INT}$_4$ and \texttt{INT}$_8$ is similar to \texttt{FP}$_{32}$. Figure \ref{fig:MRI error} compares the recovered images and the ground-truth image. The images obtained via \texttt{INT}$_4$, \texttt{INT}$_8$, \texttt{FP}$_{32}$ mixed-precision CP decomposition are visually identical and vastly superior to those without noise removal. The results demonstrate that the mixed-precision Algorithm \ref{alg:SGD} is capable of producing accurate decomposition for noisy MRI datasets.

\subsection{FPGA Demonstration for Edge Computing} The proposed mixed-precision CP decomposition can reduce the computing cost on both cloud and edge devices. Here we implement Algorithm \ref{alg:SGD} on a Field Programmable Gate Array (FPGA) to demonstrate its benefit on resource-constrained edge devices. FPGAs are widely used for edge computing due to their energy efficiency, flexible reconfigurability, and fast time-to-market \cite{shan2019exact}. However, FPGAs have very limited memory and computing resources, therefore, it is desired to use low numerical precision to save the hardware cost in massive engineering applications. 

We consider a rank-$20$ tensor with size $(100,100,100)$ and sample size $(20,20,20)$. Table \ref{tab:FPGA} shows the performance of Algorithm \ref{alg:SGD} on FPGA in different precisions. The integer operations accelerate Algorithm \ref{alg:SGD} about \textbf{2.8} times compared to \texttt{FP}$_{32}$. The running time of Algorithm \ref{alg:SGD} in \texttt{INT}$_2$ and \texttt{INT}$_8$ are dominated by higher precision parts in the algorithm. Consequently, \texttt{INT}$_2$ and \texttt{INT}$_8$ have the similar running time. The usage of BRAM (block random-access memory) of \texttt{INT}$_2$ and \texttt{INT}$_8$ is about \textbf{20} times less than \texttt{FP}$_{32}$. Due to the reduced hardware resource requirements, the power consumption of \texttt{INT}$_2$ and \texttt{INT}$_8$ is also reduced. The energy cost of \texttt{FP}$_{32}$ is 4 times more than the energy consumed by \texttt{INT}$_2$ and \texttt{INT}$_8$. The number of FF (Flip-Flop) used by \texttt{INT}$_2$ is \textbf{2.3} times less than \texttt{INT}$_8$ and \textbf{7.2} times less than \texttt{FP}$_{32}$, respectively. In summary, the reduction of time and resources on FPGAs successfully demonstrates the effectiveness of our proposed mixed-precision algorithm on resource-constrained devices.



\begin{table}[t]
    \centering
    \begin{tabular}{|c|c|c|c|c|c|}
    \hline
         Precision & Time (s) & BRAM & FF & Power (W) & Energy (J) \\
         \hline
         \texttt{INT}$_2$ & \textbf{3.08} & \textbf{12} & \textbf{86025} & \textbf{8.4} & \textbf{25.87} \\
         \hline
         \texttt{INT}$_8$ & \textbf{3.08} & 15 & 197868 & 9.5 & 29.26 \\
         \hline
         \texttt{FP}$_{32}$ & 8.61 & 250 & 624416 & 13.3 & 114.4 \\ \hline
    \end{tabular}
    \caption{FPGA implementations of Algorithm \ref{alg:SGD} in different precisions}
    \label{tab:FPGA}
\end{table}

\section{Conclusion}
This paper has proposed a mixed-precision stochastic gradient method for the CP tensor decomposition problem. First, the stochastic gradient is computed in mixed-precision to reduce the runtime and computation cost. Then, we develop a two-stage optimization algorithm to solve the CP decomposition problem using mixed-precision gradients. The convergence of the proposed algorithm has been proved. We have shown that the CP decomposition problem is locally strongly convex after proper normalization. Consequently, the mixed-precision SGD stage in our algorithm can have a linear convergence rate. A set of numerical experiments on GPUs and on edge devices have successfully demonstrated that our mixed-precision algorithm can significantly reduce the computation costs and latency compared to the full-precision algorithm while maintaining high accuracy. The proposed mixed-precision stochastic gradient method can be applied to many gradient-based CP decomposition algorithms. It will be an interesting future topic to study the applications of mixed-precision gradients to other optimization algorithms.

\appendix
\section{Proof of Theorem \ref{thm:convex}}\label{sec:proof of convexity}
In the appendix, we will give the concrete proof of Theorem \ref{thm:convex}. We first show the Hessian is positive definite if and only the Jacobian matrix has full column rank. Then, we prove that the Jacobian matrix has a full column rank in general.

We consider the following vector-valued function
\begin{equation} \label{eq:tF}
    {\tmat{h}}(\tbTheta) = \text{vec}\left([\![\mat{U}_1 , \big [\mat{1}^T;\tmat{U}_2\big] , \cdots , \big[\mat{1}^T;\tmat{U}_m\big] ]\!]\right),
\end{equation}
where $\text{vec}(\cdot)$ is the vectorization function.
It holds that $\tilde{f}(\tbTheta) = \|{\tmat{h}}(\tbTheta)-\text{vec}(\cA)\|^2$.

\begin{lemma} \label{lemma:pdH}
    Suppose that $\tilde{f}(\tbTheta)=0$, then $\nabla^2 \tilde{f}(\tbTheta)$ is positive definite if and only if the Jacobian $\mat{J}_{\tmat{h}}(\tbTheta)$ has full column rank.
\end{lemma}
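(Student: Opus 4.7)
The plan is to exploit the Gauss--Newton structure of the least-squares objective $\tilde{f}(\tbTheta) = \|\tmat{h}(\tbTheta) - \text{vec}(\cA)\|^2$, where $\tmat{h}$ is the vectorization of the CP model in \eqref{eq:tF}. Writing $\mat{r}(\tbTheta) := \tmat{h}(\tbTheta) - \text{vec}(\cA)$ for the residual, the first step is to compute
\[
    \nabla \tilde{f}(\tbTheta) = 2\, \mat{J}_{\tmat{h}}(\tbTheta)^T \mat{r}(\tbTheta),
\]
and then differentiate once more to obtain the standard Hessian decomposition
\[
    \nabla^2 \tilde{f}(\tbTheta) = 2\, \mat{J}_{\tmat{h}}(\tbTheta)^T \mat{J}_{\tmat{h}}(\tbTheta) + 2 \sum_{k} r_k(\tbTheta)\, \nabla^2 h_k(\tbTheta),
\]
where $h_k$ denotes the $k$-th component of $\tmat{h}$. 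This is the only place where a (routine) calculation is needed.

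Next, I would invoke the assumption $\tilde{f}(\tbTheta)=0$. Since $\tilde{f}$ is a sum of squared entries of $\mat{r}(\tbTheta)$, vanishing of $\tilde{f}$ forces $\mat{r}(\tbTheta)=\mat{0}$, which annihilates the second-order ``residual-weighted'' term above. Therefore
\[
    \nabla^2 \tilde{f}(\tbTheta) = 2\, \mat{J}_{\tmat{h}}(\tbTheta)^T \mat{J}_{\tmat{h}}(\tbTheta).
\]
Finally, I would apply the elementary fact that a Gram matrix $\mat{J}^T \mat{J}$ is always positive semidefinite and is positive definite if and only if $\mat{J}$ has trivial null space, i.e., full column rank. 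This yields the equivalence claimed in the lemma.

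There is no real obstacle in the proof itself: once one notices that the hypothesis $\tilde{f}(\tbTheta)=0$ collapses the Hessian to the pure Gauss--Newton term $2\mat{J}_{\tmat{h}}^T \mat{J}_{\tmat{h}}$, the conclusion is immediate from basic linear algebra. The genuine difficulty in proving Theorem \ref{thm:convex} is deferred to the subsequent step, namely showing that for generic $\tbTheta^*$ the Jacobian $\mat{J}_{\tmat{h}}(\tbTheta^*)$ indeed has full column rank whenever $r \le r_m$, which requires a dimension-counting argument based on the Khatri--Rao structure of the CP gradient and the bounds \eqref{eq:r3}--\eqref{eq:rm}.
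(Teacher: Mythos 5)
Your proposal is correct and follows essentially the same route as the paper: both reduce the Hessian at a zero-residual point to the Gauss--Newton term $2\,\mat{J}_{\tmat{h}}(\tbTheta)^T\mat{J}_{\tmat{h}}(\tbTheta)$ and then invoke the fact that a Gram matrix is positive definite if and only if the underlying matrix has full column rank. If anything, your version is slightly more explicit than the paper's, since you write out the residual-weighted second-order term $2\sum_k r_k(\tbTheta)\nabla^2 h_k(\tbTheta)$ and show it is annihilated by the hypothesis $\tilde{f}(\tbTheta)=0$, whereas the paper states the collapsed Hessian formula directly.
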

\begin{proof}
   $\tilde{f}(\tbTheta)=0$ implies that ${\tmat{h}}(\tbTheta)-\text{vec}(\cA)=\mat{0}$. Then, we have
   \[
        \nabla \tilde{f}(\tbTheta) = 2 (\mat{J}_{\tmat{h}}(\tbTheta))^T{\tmat{h}}(\tbTheta), \, \nabla^2\tilde{F}(\tbTheta) = 2 (\mat{J}_{\tmat{h}}(\tbTheta))^T \mat{J}_{\tmat{h}}(\tbTheta).
   \]
    As a result, $\nabla^2\tilde{f}(\tbTheta)$ is positive definite if and only $\mat{J}_{\tmat{h}}(\tbTheta)$ has full column rank.
\end{proof}


In the following, we first prove $\mat{J}_{\tmat{h}}(\tbTheta)$ generally has full column rank for the order $m=3$ and the even $N_1$, then we extend the result to general orders and dimensions. 

\begin{proposition} \label{prop:convex3}
    When $m=3$, $N_1$ is even, $N_1\ge N_2\ge N_3\ge 3$, and $r\le N_1 \lfloor \frac{n_2n_3}{N_1+N_2+N_3-2} \rfloor$, the Jacobian matrix $\mat{J}_ {\tmat{h}}(\tbTheta)$ has full column rank for generic $\tbTheta = (\mat{U}_1,\tmat{U}_2,\tmat{U}_3 )$.
\end{proposition}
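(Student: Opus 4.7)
The plan is to reduce the claim to the existence of a single instance at which the Jacobian is full rank, and then to construct such an instance by exploiting the factor $N_1$ in the rank bound. Every entry of $\mat{J}_{\tmat{h}}(\tbTheta)$ is polynomial in $\tbTheta$, so each $r(N_1+N_2+N_3-2)$-minor of $\mat{J}_{\tmat{h}}(\tbTheta)$ is a polynomial in $\tbTheta$. It therefore suffices to exhibit one $\tbTheta^*$ at which at least one such minor is nonzero: that minor will then be a nonzero polynomial, its vanishing locus will be a Zariski-closed subset of the parameter space of Lebesgue measure zero, and the generic full-column-rank conclusion follows.

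For the construction, set $s = \lfloor \tfrac{N_2 N_3}{N_1+N_2+N_3-2}\rfloor$ and partition $[r]$ into $N_1$ disjoint subsets $B_1,\ldots,B_{N_1}$ with $|B_i| \le s$, which is possible since $r \le N_1 s$. Choose $\mat{U}_1^*$ so that column $\ell$ has its unique nonzero entry in row $i$ whenever $\ell \in B_i$. Because $\mat{U}_1^*(i',\ell) = 0$ for $i' \ne i$, the derivatives of $\tmat{h}$ with respect to any entry of $\tmat{U}_2(\cdot,\ell)$ or $\tmat{U}_3(\cdot,\ell)$ are supported entirely in the $i$-th mode-$1$ slice of the output tensor whenever $\ell \in B_i$, while the derivative with respect to $\mat{U}_1(i',\ell)$ is supported in slice $i'$. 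Consequently, after an appropriate coordinate permutation, $\mat{J}_{\tmat{h}}(\tbTheta^*)$ decomposes into $N_1$ block-independent slicewise sub-Jacobians $\mat{J}^{(i)}$, reducing the rank analysis to a single slice at a time.

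The main obstacle is showing that each sub-Jacobian $\mat{J}^{(i)}$ has full column rank. It has $r + |B_i|(N_2+N_3-2)$ columns living in $\re^{N_2 N_3}$, split as (a) $r$ columns of the form $\mat{V}_3^*(:,\ell) \otimes \mat{V}_2^*(:,\ell)$ from $\mat{U}_1(i,\cdot)$, with $\mat{V}_j^* = [\mat{1}^T;\tmat{U}_j^*]$; (b) for each $\ell \in B_i$ and $j \in \{2,\ldots,N_2\}$, a column proportional to $\mat{V}_3^*(:,\ell) \otimes \mat{e}_j$ arising from $\tmat{U}_2$; and (c) the analogous columns $\mat{e}_k \otimes \mat{V}_2^*(:,\ell)$ arising from $\tmat{U}_3$. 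I would choose the free entries of $\tmat{U}_2^*$ and $\tmat{U}_3^*$ generically, using the even parity of $N_1$ to arrange the $N_1$ slices in pairs so that the within-pair linear-independence claim reduces to a single block-wise determinantal identity. The inequality $s(N_1+N_2+N_3-2) \le N_2 N_3$, guaranteed by $|B_i| \le s$ and $r \le N_1 s$, is exactly the dimension bound required for the count to succeed. Assembling the $N_1$ independent slice blocks then gives full column rank of $\mat{J}_{\tmat{h}}(\tbTheta^*)$, and combined with the polynomial non-vanishing step completes the proof.
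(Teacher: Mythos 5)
Your opening reduction --- exhibit one $\tbTheta^*$ at which some maximal minor of $\mat{J}_{\tmat{h}}$ is nonzero, then invoke polynomial genericity --- is correct and is exactly how the paper proceeds. The gap is in the construction of $\tbTheta^*$. Because you give every column of $\mat{U}_1^*$ its unique nonzero entry in a \emph{single} row, every column of $\mat{J}_{\tmat{h}}(\tbTheta^*)$ is supported in exactly one mode-$1$ slice, so full column rank of the whole Jacobian forces the columns landing in slice $i$ to be linearly independent on their own. Those columns include, for each $\ell\in B_i$, the vectors $\mat{V}_2^*(:,\ell)\otimes\mat{V}_3^*(:,\ell)$, $\{\mat{e}_p\otimes\mat{V}_3^*(:,\ell)\}_{p=2}^{N_2}$, and $\{\mat{V}_2^*(:,\ell)\otimes\mat{e}_q\}_{q=2}^{N_3}$, and their span is the full affine tangent space, at the rank-one matrix $\mat{V}_2^*(:,\ell)\,\mat{V}_3^*(:,\ell)^T$, to the variety of rank-one $N_2\times N_3$ matrices (dimension $N_2+N_3-1$). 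But tangent spaces to the rank-one \emph{matrix} variety at two distinct points are never in direct sum: for points $\mat{u}_1\mat{v}_1^T$ and $\mat{u}_2\mat{v}_2^T$, the matrix $\mat{u}_1\mat{v}_2^T$ lies in both tangent spaces (take $\mat{x}\mat{v}_1^T+\mat{u}_1\mat{y}^T$ with $\mat{x}=\mat{0},\mat{y}=\mat{v}_2$, and $\mat{x}\mat{v}_2^T+\mat{u}_2\mat{y}^T$ with $\mat{x}=\mat{u}_1,\mat{y}=\mat{0}$), as does $\mat{u}_2\mat{v}_1^T$; equivalently, secant varieties of the two-factor Segre are defective, with $\dim\sigma_s=s(N_2+N_3-s)<s(N_2+N_3-1)$ for $s\ge 2$. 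Hence whenever some $|B_i|\ge 2$ --- unavoidable by pigeonhole once $r>N_1$, e.g.\ for the extremal rank with $N_1=N_2=N_3=9$, where $s=3$ --- your block $\mat{J}^{(i)}$ has linearly dependent columns for \emph{every} choice of $\tmat{U}_2^*,\tmat{U}_3^*$, however generic. The dimension count $s(N_1+N_2+N_3-2)\le N_2N_3$ is necessary but cannot repair this.

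This failure is precisely what the evenness hypothesis on $N_1$ is for in the paper's proof: there, each group of columns of $\mat{U}_1$ is supported on a \emph{pair} of rows $\{2i-1,2i\}$, with pairwise independent $2$-vectors $\mat{a}_j=\mat{U}_1(2i-1:2i,j)$, so each block lives in $\re^{2N_2N_3}$ and its columns are tangent vectors to the \emph{three}-factor Segre $\mathbb{P}^1\times\mathbb{P}^{N_2-1}\times\mathbb{P}^{N_3-1}$, which, unlike the matrix case, admits independent tangent spaces in this range; the paper verifies this by explicit elimination against orthonormal bases $\{\mat{b}_j\},\{\mat{c}_j\}$ together with the auxiliary column sets $\mat{Q}_1,\mat{Q}_2,\mat{Q}_3$. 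Your closing remark about ``arranging the $N_1$ slices in pairs'' cannot be reconciled with your own choice of $\mat{U}_1^*$: once each column of $\mat{U}_1^*$ is supported on one row, slice-wise independence is already forced, and no subsequent pairing of slices can undo it. To fix the proof you must allow each group of columns of $\mat{U}_1^*$ two nonzero rows from the start --- which is the paper's construction.
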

\begin{proof}
    It suffices to only consider $r=N_1 \lfloor \frac{n_2n_3}{N_1+N_2+N_3-2} \rfloor$.
    The Jacobian matrix $\mat{J}_{\tmat{h}}(\tbTheta)$ has full column rank if and only 
    $(\mat{J}_{\tmat{h}}(\tbTheta))^T\mat{J}_{\tmat{h}}(\tbTheta)$ has non-zero determinant, which is a polynomial function $p$ in terms of variables $\tbTheta$. The conclusion is then equivalent to that $p(\tbTheta)$ is nonzero for generic $\tbTheta$. Thus, it suffices to show $p(\tbTheta)$ is not the constant $0$ polynomial \cite{cox2013ideals}. In the following, we will construct the specific $\tbTheta$ such that $p(\tbTheta)\neq 0$. 
    
    Here we denote $\mat{U}_i = [\mat{1}^T;\tmat{U}_i]$ for convenience.
    Let $k =  \frac{N_1}{2}  $ and $R = 2 \lfloor \frac{n_2n_3}{N_1+N_2+N_3-2} \rfloor$.
    We evenly split $\{1,\ldots,r\}$ into $k$ groups such that each group has $R=2 \lfloor \frac{n_2n_3}{N_1+N_2+N_3-2}\rfloor $ elements. We denote the groups by $\group_1,\ldots,\group_k$. For $i=1,\ldots,k$, let 
     \begin{eqnarray*}
        \mat{J}_i&=&\bigg{[}
            \Big[ [\mat{e}_{2i-1},\mat{e}_{2i}]\otimes \mat{U}_2(:,j)\otimes \mat{U}_3(:,j)\Big]_{j=1}^r,\, \left[\mat{U}_1(:,j)\otimes \tilde{\mat{I}}_{N_2} \otimes \mat{U}_3(:,j)\right]_{j\in \group_i}, \\
            && \qquad \qquad \qquad  \left[\mat{U}_1(:,j)\otimes \mat{U}_2(:,j) \otimes \tilde{\mat{I}}_{N_3}\right]_{j\in \group_i} \bigg],
    \end{eqnarray*}
    where $\tmat{I}_n:=\mat{I}_n(2:n,:)$. The Jacobian $\mat{J}_{\tmat{h}}(\tbTheta)$ can be written as  $\big[\mat{J}_1,\ldots,\mat{J}_k\big]$.
   
    We construct the matrix $\mat{U}_1$, where for $j\in \group_i$
    \[
        \mat{U}_1(l,j) = 
            0,  \text{ if } l \neq 2i-1, 2i.
    \]
    For such $\mat{U}_1$, the Jocabian $\mat{J}_{\tmat{h}}(\tbTheta)$ has full column rank if and only if each $\mat{J}_i$ has full column rank. In the following, we will construct $\tbTheta$ such that $\mat{J}_1$ has full column rank. 

    Since $N_1\ge N_2 \ge N_3 \ge 3$, it holds that 
    \[
        R = 2 \lfloor \frac{n_2n_3}{N_1+N_2+N_3-2} \rfloor \le \frac{2n_2n_3}{2n_2+N_3-2}\le \frac{2n_2n_3}{2n_2-1}< \frac{2n_2n_3}{2n_2} = N_3.
    \]
    Thus, we have $R<N_3\le N_2$.
    Let $\mat{a}_1,\ldots,\mat{a}_r$ be pairwisely independent vectors, $\mat{b}_1,\ldots,\mat{b}_{N_2}$ and $\mat{c}_1,\ldots,\mat{c}_{N_3}$ be orthonormal basis of $\re^{N_2}$ and $\re^{N_3}$, respectively. The orthonormal basis can be chosen such that all leading entries are nonzero. Denote the matrix
    \[
        \mat{P} := \left[
            \left[\mat{I}_2\otimes \mat{b}_j \otimes \mat{c}_j, \mat{a}_j \otimes \tmat{I}_{N_2} \otimes \mat{c}_j, \mat{a}_j \otimes \mat{b}_j \otimes \tmat{I}_{N_3}\right]_{j=1}^R, \mat{Q}_1,\mat{Q}_2,\mat{Q}_3
        \right],
    \]
    where
    \[
     \mat{Q}_1 = \big[\mat{I}_2\otimes \mat{b}_i \otimes \mat{c}_j \big]
     _{R+1\le i \le N_2,R+1\le j \le N_3},
    \]
    \[
     \mat{Q}_2 = \big[\mat{I}_2\otimes \mat{b}_i \otimes (\mat{c}_{2j-1}+\mat{c}_{2j})\big]_{R+1\le i \le N_2,1\le j \le R/2},
    \]
    \[
     \mat{Q}_3 = \big[\mat{I}_2\otimes (\mat{b}_{2i-1}+\mat{b}_{2i}) \otimes \mat{c}_j\big]_{1\le i \le R/2,R+1\le j \le N_3}.
    \]
    Next, we show the matrix $\mat{P}$ has full column rank. It is equivalent to proving that $\mat{P}\mat{x}=\mat{0} \Leftrightarrow \mat{x}=\mat{0}$. We first prove the coefficients for $\mat{Q}_1$ is zero. For some $R+1\le i \le N_2,R+1\le j \le N_3$, it holds that 
    \begin{eqnarray*}
        \big(\mat{I}_2\otimes \mat{b}_i^T \otimes \mat{c}_j^T\big) \mat{P}\mat{x} = \mat{I}_2 {\gmat{\lambda}} = \mat{0} \Rightarrow \gmat{\lambda} = \mat{0},
    \end{eqnarray*}
    where $\gmat{\lambda}$ is the coefficient vector corresponding to $\mat{I}_2\otimes \mat{b}_i \otimes \mat{c}_j$ in $\mat{x}$. Thus the coefficient for $\mat{Q}_1$ is zero. Then we show the coefficient for $\mat{Q}_2$ is zero. For some $R+1\le i \le N_2,1\le j \le R/2$, it holds 
    \[
        \big(\mat{I}_2\otimes \mat{b}_i^T \otimes \mat{c}_{2j-1}^T \big ) \mat{P}\mat{x} = \big(\mat{a}_{2j-1}\otimes \mat{b}_i^T \tmat{I}_{N_2}, \mat{I}_2\big)  \begin{bmatrix} \gmat{\lambda_1} \\ \gmat{\mu} \end{bmatrix} = \mat{0},
    \]
    \[
        \big(\mat{I}_2\otimes \mat{b}_i^T \otimes \mat{c}_{2j}^T\big) \mat{P}\mat{x} = \big(\mat{a}_{2j}\otimes \mat{b}_i^T \tmat{I}_{N_2}, \mat{I}_2\big)  \begin{bmatrix} \gmat{\lambda}_2 \\ \gmat{\mu} \end{bmatrix} = \mat{0},
    \]
    where $\gmat{\lambda}_1,\gmat{\lambda}_2,\gmat{\mu}$ are coefficients corresponding to $\mat{a}_{2j-1}\otimes \tmat{I}_{N_2} \otimes \mat{c}_{2j},\mat{a}_{2j}\otimes \tmat{I}_{N_2} \otimes \mat{c}_{2j}$ and $\mat{I}_2\otimes \mat{b}_i \otimes (\mat{c}_{2j-1}+\mat{c}_{2j}) $ respectively. By above equations, we know 
    \[
        \gmat{\mu} = -(\mat{a}_{2j-1}\otimes \mat{b}_i^T \tmat{I}_{N_2}) \gmat{\lambda}_1 = -(\mat{a}_{2j}\otimes \mat{b}_i^T \tmat{I}_{N_2}) \gmat{\lambda}_2.
    \]
    Thus, $ \mat{b}_i^T \tmat{I}_{N_2} \gmat{\lambda}_1 = \mat{b}_i^T \tmat{I}_{N_2} \gmat{\lambda}_2 = \mat{0}$ since $\mat{a}_{2j-1},\mat{a}_{2j}$ are linearly independent. It implies that $\gmat{\mu}=0$. Therefore, the coefficient for $\mat{Q}_2$ is zero. Similarly, we can prove the coefficient for $\mat{Q}_3$ is zero by using exactly the same technique. 
    
    Next, we show the coefficient of $\mat{a}_j\otimes \tmat{I}_{N_2}\otimes \mat{c}_j$ is zero. For some $1\le i\le N_2$ and $i\neq j$, it holds 
    \[
        \big(\mat{I}_2\otimes \mat{b}_i^T \otimes \mat{c}_j^T\big)\mat{P}\mat{x} = \big(\mat{a}_j \otimes \mat{b}_i^T \tmat{I}_{N_2},\mat{a}_i \otimes \mat{c}_j^T \tmat{I}_{N_3}\big) \begin{bmatrix}
            \gmat{\mu} \\ \gmat{\lambda}_j
        \end{bmatrix} = \mat{0} \Rightarrow \mat{b}_i^T \tmat{I}_{N_2} \gmat{\mu} = \mat{c}_j^T \tmat{I}_{N_3} \gmat{\lambda}_j = 0,
    \]
    where $\gmat{\mu},\gmat{\lambda}_j$ are coefficients corresponding to $\mat{a}_j \otimes \tmat{I}_{N_2} \otimes \mat{c}_j, \mat{a}_i \otimes \mat{b}_i \otimes \tmat{I}_{N_3}$ respectively. The above equation holds for every $i$ such that $1\le i\le N_2$ and $i\neq j$. Therefore, we have
    \[
        \big[\tmat{I}_{N_2}^T\mat{b}_1,\ldots, \tmat{I}_{N_2}^T\mat{b}_{j-1},\tmat{I}_{N_2}^T\mat{b}_{j+1},\ldots,\tmat{I}_{N_2}^T\mat{b}_{N_2}\big]^T \gmat{\mu} = \mat{0} \Rightarrow \gmat{\mu} = \mat{0}.
    \]
    The above holds because $\big[\tmat{I}_{N_2}^T\mat{b}_1,\ldots, \tmat{I}_{N_2}^T\mat{b}_{j-1},\tmat{I}_{N_2}^T\mat{b}_{j+1},\ldots,\tmat{I}_{N_2}^T\mat{b}_{N_2}\big]^T \in \re^{(N_2-1)\times (N_2-1)} $ is nonsingular. It proves that the coefficient for $\mat{a}_j \otimes \tmat{I}_{N_2} \otimes \mat{c}_j$ is zero. Similarly, we can prove the coefficient for $\mat{a}_j \otimes \mat{b}_j \otimes \tmat{I}_{N_3} $ is zero. 
    
    The only remaining part in $\mat{P}$ is $\big[\mat{I}_2\otimes \mat{b}_j \otimes \mat{c}_j\big]_{j=1}^R $. $\big[\mat{I}_2\otimes \mat{b}_j \otimes \mat{c}_j\big]_{j=1}^R $ has full column rank since $\mat{b}_1,\ldots,\mat{b}_R$ are linearly independent. Thus, the coefficients corresponding to $\big[\mat{I}_2\otimes \mat{b}_j \otimes \mat{c}_j\big]_{j=1}^R $ are also zero. It finishes the proof that $\mat{P}\mat{x}=\mat{0} \Leftrightarrow \mat{x} = \mat{0}$. Thus, $\mat{P}$ has full column rank. 
    
    Let
    \begin{eqnarray*}
        \mat{Q} := 
        \bigg[
        &\Big[\frac{\mat{b}_i}{(\mat{b}_i)_1} \otimes \frac{\mat{c}_j}{(\mat{c}_j)_1}\Big]_{R+1\le i \le N_2,R+1\le j \le N_3}, 
        \Big[\frac{\mat{b}_i}{(\mat{b}_i)_1} \otimes \frac{\mat{c}_{2j-1}+\mat{c}_{2j}}{(\mat{c}_{2j-1})_1+(\mat{c}_{2j})_1}\Big]_{R+1\le i \le N_2,1\le j \le R/2},\\
        &\Big[\frac{\mat{b}_{2i-1}+\mat{b}_{2i}}{(\mat{b}_{2i-1})_1+(\mat{b}_{2i})_1} \otimes \frac{\mat{c}_j}{(\mat{c}_j)_1}\Big]_{1\le i \le R/2,R+1\le j \le N_3}
        \bigg].
    \end{eqnarray*}
    The number of columns of $\mat{Q}$ is 
    \[
        c = (N_2-R)(N_3-R) + (N_2-R)\frac{R}{2} + (N_3-R)\frac{R}{2} = n_2n_3-\frac{R}{2}(N_2+N_3).
    \]
    It holds that,
    \[
        c - (r-R) = n_2n_3-\frac{R}{2}(N_2+N_3-2) - \frac{n_1R}{2} =  n_2n_3 - \frac{R}{2}(N_1+N_2+N_3-2) \ge 0.
    \]
    Let $\mat{U}_1(1:2,j)=\mat{a}_j,\tmat{U}_2(:,j) = \mat{b}_j(2:)/(\mat{b}_j)_1,\tmat{U}_3(:,j) = \mat{c}_j(2:)/(\mat{c}_j)_1$ for $j=1,\ldots,R$ and $\tmat{U}_2(:,j),\tmat{U}_3(:,j)$ be vectors such that 
    \begin{eqnarray*}
        \Big[[1;\tmat{U}_2(:,j)]\otimes [1;\tmat{U}_3(:,j)]\Big]_{R+1\le j \le r} = \mat{Q}(:,1:r-R).
    \end{eqnarray*}
    Then, all columns of $\mat{J}_1$ are from $\mat{P}$. We have shown that $\mat{P}$ has full column rank, so $\mat{J}_1$ must have full column rank. The same technique can be applied to $\mat{J}_1,\ldots,\mat{J}_k$. 
    
   Now we have proven that $\mat{J}_{\tmat{h}}(\tbTheta)$ is has full column rank for some $\tbTheta$. Therefore, the Jacobian $\mat{J}_{\tmat{h}}(\tbTheta)$ has full column rank for generic $\tbTheta=(\mat{U}_1,\tmat{U}_2,\tmat{U}_3)$.
\end{proof}

Proposition \ref{prop:convex3} proves the case when $N_1$ is even. If $N_1$ is odd, we may simply consider $N_1-1$ to make the largest dimension even. The result is stated in the following corollary.
\begin{corollary}
 If $m=3$, $N_1 \ge N_2\ge N_3\ge 3$, and $r\le r_3$ for $r_3$ in \eqref{eq:r3}, then the Jacobian matrix $\mat{J}_{\tmat{h}}(\tbTheta)$ has full column rank for generic $\tbTheta = (\mat{U}_1,\tmat{U}_2,\tmat{U}_3 )$.
\end{corollary}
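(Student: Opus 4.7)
The plan is to handle the even and odd cases for $N_1$ separately, with the even case being a direct application of Proposition \ref{prop:convex3} and the odd case reducing to it by a zero-padding construction. When $N_1$ is even, we have $\tilde{N}_i = N_i$ for $i=1,2,3$, so $r_3$ coincides with the bound in Proposition \ref{prop:convex3} and the corollary follows immediately.

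When $N_1$ is odd, we set $\tilde{N}_1 = N_1 - 1$ (and $\tilde{N}_2, \tilde{N}_3$ as in \eqref{eq:r3}) and exploit the inclusion of parameter spaces. By Proposition \ref{prop:convex3}, applied to dimensions $\tilde{N}_1 \ge \tilde{N}_2 \ge \tilde{N}_3$ with rank $r \le r_3$, there exists a choice $\tilde{\tbTheta} = (\tilde{\mat{U}}_1, \tilde{\tmat{U}}_2, \tilde{\tmat{U}}_3)$ such that the corresponding sub-problem Jacobian has full column rank. I would embed this into the full parameter space by zero-extension: set $\mat{U}_1(1{:}\tilde{N}_1,:) = \tilde{\mat{U}}_1$ and $\mat{U}_1(\tilde{N}_1+1{:}N_1,:) = \mat{0}$, and similarly pad $\tmat{U}_2, \tmat{U}_3$ with zero rows beyond indices $\tilde{N}_2-1, \tilde{N}_3-1$ respectively.

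The central step is a block-structure argument for $\mat{J}_{\tmat{h}}(\tbTheta)$. I would partition its columns into four groups: (A) those corresponding to the ``original'' parameters present in the sub-problem, and (B), (C), (D) the ``extra'' derivatives with respect to $\mat{U}_1(i,j)$ for $i > \tilde{N}_1$, $\tmat{U}_2(i,j)$ for $i > \tilde{N}_2-1$, and $\tmat{U}_3(i,j)$ for $i > \tilde{N}_3-1$ respectively. Because the padded factors vanish on the extra rows, each column in group A is supported on $[\tilde{N}_1] \times [\tilde{N}_2] \times [\tilde{N}_3]$, while columns in groups B, C, D are supported on disjoint slabs $\{\tilde{N}_1+1,\ldots,N_1\} \times [\tilde{N}_2] \times [\tilde{N}_3]$, $[\tilde{N}_1] \times \{\tilde{N}_2+1,\ldots,N_2\} \times [\tilde{N}_3]$, and $[\tilde{N}_1] \times [\tilde{N}_2] \times \{\tilde{N}_3+1,\ldots,N_3\}$ respectively. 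Pairwise disjointness of supports reduces linear independence across blocks to linear independence within each block: group A is full rank by Proposition \ref{prop:convex3}, while each slice of groups B, C, D reduces to a Khatri--Rao product of two of $\tilde{\mat{U}}_1, \tilde{\mat{U}}_2, \tilde{\mat{U}}_3$ having full column rank, which is implied by $r \le r_3 \le \min(\tilde{N}_2\tilde{N}_3, \tilde{N}_1\tilde{N}_3, \tilde{N}_1\tilde{N}_2)$ together with genericity.

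The main obstacle is that the specific $\tilde{\tbTheta}$ produced by the construction in Proposition \ref{prop:convex3} need not automatically make all three relevant Khatri--Rao products full rank. I would resolve this by noting that both properties (full-rank sub-Jacobian and full-rank pairwise Khatri--Rao products) are Zariski-open conditions on $\tilde{\tbTheta}$; since Proposition \ref{prop:convex3} exhibits a point satisfying the first and the second clearly holds on a nonempty open set, their intersection is nonempty. Hence some $\tilde{\tbTheta}$, and therefore some extended $\tbTheta$, realizes all four blocks at full rank. Finally, full column rank of $\mat{J}_{\tmat{h}}(\tbTheta)$ is equivalent to the nonvanishing of the polynomial $\det((\mat{J}_{\tmat{h}})^T \mat{J}_{\tmat{h}})$ in the entries of $\tbTheta$; having exhibited one $\tbTheta$ with this determinant nonzero, this polynomial is not identically zero, so the condition holds on a Zariski-open dense set, i.e., for generic $\tbTheta$.
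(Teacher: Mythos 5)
Your proposal is correct, and it follows the same skeleton as the paper's proof: the even case is read off directly from Proposition \ref{prop:convex3}, and the odd case is reduced to the even one via the truncated dimensions $(\tilde{N}_1,\tilde{N}_2,\tilde{N}_3)$. The difference is in how the odd case is closed, and here your version is genuinely stronger. The paper sets $\mat{V}_1=\mat{U}_1(1{:}\tilde{N}_1,:)$, $\tmat{V}_2=\tmat{U}_2(1{:}\tilde{N}_2-1,:)$, $\tmat{V}_3=\tmat{U}_3(1{:}\tilde{N}_3-1,:)$, notes that $\mat{J}_{\tmat{h}}(\mat{V}_1,\tmat{V}_2,\tmat{V}_3)$ ``only consists of some rows'' of $\mat{J}_{\tmat{h}}(\tbTheta)$, and concludes. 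That inference silently ignores the columns of $\mat{J}_{\tmat{h}}(\tbTheta)$ attached to the parameters absent from the sub-problem ($\mat{U}_1(l,:)$ for $l>\tilde{N}_1$ and the trailing rows of $\tmat{U}_2,\tmat{U}_3$): restricted to the rows indexed by $[\tilde{N}_1]\times[\tilde{N}_2]\times[\tilde{N}_3]$, the full Jacobian is (up to column ordering) $\big[\mat{J}_{\tmat{h}}(\mat{V}_1,\tmat{V}_2,\tmat{V}_3),\ \mat{0}\big]$, so full column rank of the sub-Jacobian alone does not yield full column rank of the whole matrix; the extra columns must be argued separately. Your zero-padding construction handles exactly those columns: at the padded point the four column groups have pairwise disjoint supports, group A reduces to Proposition \ref{prop:convex3}, and groups B, C, D reduce to full column rank of pairwise Khatri--Rao products, which holds generically because $r\le r_3\le\min(\tilde{N}_2\tilde{N}_3,\tilde{N}_1\tilde{N}_3,\tilde{N}_1\tilde{N}_2)$ (this inequality does hold, since each denominator term dominates the corresponding numerator factor). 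The Zariski-open intersection step and the final determinant argument are also sound, since nonempty Zariski-open subsets of affine space intersect. In short, your route is a completed version of the paper's: it costs more work, but it supplies precisely the step the paper's argument is missing. One caveat you share with the paper: when $N_1=N_2=N_3=3$, the reduced dimensions are $(2,2,2)$, so Proposition \ref{prop:convex3}'s hypothesis $\tilde{N}_3\ge 3$ fails and neither argument covers that corner case.
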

\begin{proof}
    When $N_1$ is even, it is the result of Proposition \ref{prop:convex3}.
    
    When $N_1$ is odd, we consider the dimension $(\tilde{N}_1,\tilde{N}_2,\tilde{N}_3)$, where $\tilde{N}_{1},\tilde{N}_{2},\tilde{N}_{3}$ are largest integers such that $\tilde{N}_{1}$ is even, $\tilde{N}_{1}\ge \tilde{N}_{2} \ge \tilde{N}_{3}$, and $N_i \ge \tilde{N}_i$ for $i=1,2,3$. Let $\mat{V}_1:=\mat{U}_1(1:\tilde{N}_1 ,:),\tmat{V}_2:=\tmat{U}_2(1:\tilde{N}_2-1 ,:),\tmat{V}_3:=\tmat{U}_3(1:\tilde{N}_3-1 ,:)$ By Proposition \ref{prop:convex3}, the matrix $\mat{J}_{\tmat{h}}(\mat{V}_1,\tmat{V}_2,\tmat{V}_3)$ has full column rank for generic $\mat{V}_1,\tmat{V}_2,\tmat{V}_3$. The matrix $\mat{J}_{\tmat{h}}(\mat{V}_1,\tmat{V}_2,\tmat{V}_3)$ only consists of some rows in $\mat{J}_{\tmat{h}}(\tbTheta)$. Thus, $\mat{J}_{\tmat{h}}(\tbTheta)$ has full column rank if $\mat{J}_{\tmat{h}}(\mat{V}_1,\tmat{V}_2,\tmat{V}_3)$ has full column rank. It proves $\mat{J}_{\tmat{h}}(\tbTheta)$ has full column rank for generic $\tbTheta = (\mat{U}_1,\tmat{U}_2,\tmat{U}_3 )$.
\end{proof}

We have shown the local convexity for order-3 tensors. Finally, we prove the higher order cases by induction on the order $m$ with base case $m=3$.
\begin{theorem} \label{thm:lidJ}
  If $r\le r_m$ for $r_m$ in \eqref{eq:rm}, then the Jacobian matrix $\mat{J}_{\tmat{h}}(\tbTheta)$ has full column rank for generic $\tbTheta=(\mat{U}_1,\tmat{U}_2,\ldots,\tmat{U}_m)$.
\end{theorem}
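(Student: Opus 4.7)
The plan is to prove Theorem \ref{thm:lidJ} by induction on the order $m$, with the base case $m=3$ already established by the corollary following Proposition \ref{prop:convex3}. Throughout, I would use the same ``nonzero-polynomial implies generic non-vanishing'' reduction: the entries of $(\mat{J}_{\tmat{h}}(\tbTheta))^T \mat{J}_{\tmat{h}}(\tbTheta)$ are polynomials in $\tbTheta$, so it suffices to exhibit a \emph{single} $\tbTheta$ at which the Jacobian has full column rank. Without loss of generality I would assume $N_1$ is even (otherwise restrict to the first $N_1-1$ rows of $\mat{U}_1$, using the observation from the base case that deleting rows can only decrease the column rank of $\mat{J}_{\tmat{h}}$).

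\textbf{Block decomposition.} Mirroring Proposition \ref{prop:convex3}, partition $[r]$ into $k=N_1/2$ groups $\group_1,\ldots,\group_{N_1/2}$ each of size
\[
   R \;=\; 2\,\min\!\left\{r_{m-1},\; \Bigl\lfloor \tfrac{N_2\cdots N_m}{N_1+\cdots+N_m-m+1} \Bigr\rfloor\right\},
\]
and require $\mat{U}_1(l,j)=0$ whenever $l\notin\{2i-1,2i\}$ and $j\in\group_i$. This block-sparse choice makes $\mat{J}_{\tmat{h}}(\tbTheta)$ decompose as $[\mat{J}_1,\ldots,\mat{J}_{N_1/2}]$, where each $\mat{J}_i$ carries columns whose nonzero rows (indexed by tensor positions) are confined to $(2i-1,\ast,\ldots,\ast)$ and $(2i,\ast,\ldots,\ast)$. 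Since distinct blocks have disjoint row-support, the Jacobian has full column rank iff each $\mat{J}_i$ does. One verifies that each $\mat{J}_i$ has $2r + R(N_2+\cdots+N_m - m + 1)$ columns and $2N_2\cdots N_m$ potentially nonzero rows, with the arithmetic $2r \le 2N_2\cdots N_m - R(N_2+\cdots+N_m-m+1)$ following precisely from the definition of $r_m$.

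\textbf{Each block via the inductive hypothesis.} For a fixed block $i$, the columns of $\mat{J}_i$ split into (i) the $2r$ ``$\mat{U}_1$-gradient'' columns $[\mat{e}_{2i-1},\mat{e}_{2i}]\otimes\mat{U}_2(:,j)\otimes\cdots\otimes\mat{U}_m(:,j)$ for $j=1,\ldots,r$, and (ii) the ``$\tmat{U}_l$-gradient'' columns ($l=2,\ldots,m$) indexed by $j\in\group_i$. Viewing these supported on a $2\times N_2\times\cdots\times N_m$ slice, one sees that the $\tmat{U}_l$-gradient columns (for $j\in\group_i$) together with a restricted portion of the $\mat{U}_1$-gradient columns are precisely the columns of the Jacobian for an order-$(m-1)$ CP problem with dimensions $(N_2,\ldots,N_m)$ and rank $R/2\le r_{m-1}$. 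The induction hypothesis then guarantees this sub-Jacobian has full column rank for generic factors $(\tmat{U}_2(:,\group_i),\ldots,\tmat{U}_m(:,\group_i))$. The remaining $\mat{U}_1$-gradient columns (for $j\notin\group_i$ and the excess $j\in\group_i$) are then incorporated by generalizing the explicit $\mat{P}$-construction from the base case: choose orthonormal bases $\{\mat{b}^{(l)}_i\}$ in each mode $l=2,\ldots,m$ with nonzero leading entries, define $\mat{U}_l(:,j)=[1;\tmat{U}_l(:,j)]$ to scale these bases appropriately, and build the ``padding'' matrices $\mat{Q}_1,\mat{Q}_2,\mat{Q}_3$ (now generalized to $\mat{Q}_1,\ldots,\mat{Q}_{m-1}$) so that projections onto basis tensors $\mat{b}^{(2)}_{i_2}\otimes\cdots\otimes\mat{b}^{(m)}_{i_m}$ peel off the coefficients block by block.

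\textbf{Main obstacle.} The hard part will be extending the explicit projection argument from the three-mode base case ($m=3$), where only $\mat{Q}_1,\mat{Q}_2,\mat{Q}_3$ and two orthonormal bases $\{\mat{b}_i\},\{\mat{c}_i\}$ are needed, to higher order. For general $m$, many more ``mixed'' padding blocks arise (one for each nonempty subset of modes where indices exceed the cutoff $R$), and showing $\mat{P}\mat{x}=\mat{0}\Rightarrow \mat{x}=\mat{0}$ requires a careful multi-mode projection cascade: project first onto basis tensors whose indices are all $>R$ to kill one padding type, then onto mixed-index basis tensors to kill the next, etc., ultimately reducing to the diagonal block of tensors $[\mat{I}_2\otimes \mat{b}^{(2)}_j\otimes\cdots\otimes\mat{b}^{(m)}_j]_{j=1}^R$ whose linear independence is immediate. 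The bound $R < N_m \le \cdots \le N_2$ (which follows from the same algebra as in the base case) guarantees that enough distinct basis directions exist to carry out this cascade. Once each $\mat{J}_i$ is shown to have full column rank at the constructed $\tbTheta$, genericity of the result follows from the nonzero-polynomial argument, completing the induction.
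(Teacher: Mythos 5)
Your overall skeleton (induction on $m$, base case $m=3$, reduction of genericity to exhibiting a single full-rank instance) matches the paper, but the inductive step has a genuine gap, and it sits exactly where you flag "the main obstacle." You keep the base case's machinery at every order --- pairs of rows of $\mat{U}_1$, groups of size $R$, pairwise-independent vectors $\mat{a}_j\in\re^2$, and padding matrices generalizing $\mat{P},\mat{Q}_1,\mat{Q}_2,\mat{Q}_3$ --- and then defer the crucial verification ($\mat{P}\mat{x}=\mat{0}\Rightarrow\mat{x}=\mat{0}$ via a "multi-mode projection cascade") without carrying it out. That verification \emph{is} the inductive step; nothing is proved without it. Moreover, the one identification you do assert is not correct as stated: with pairwise-independent $\mat{a}_j$, the $\tmat{U}_l$-gradient columns of block $i$ have the form $\mat{a}_j\otimes\cdots\otimes\tmat{I}_{N_l}\otimes\cdots$ with a $j$-dependent leading factor, so they are not the columns of any order-$(m-1)$ Jacobian; the counts do not match either ($R(N_2+\cdots+N_m-m+1)$ such columns versus $(R/2)(N_2+\cdots+N_m-m+2)$ columns for a rank-$R/2$ Jacobian). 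The identification only becomes valid if each $\mat{a}_j$ is $\mat{e}_1$ or $\mat{e}_2$, so that the two slices of a block decouple --- but then your construction is no longer a generalization of the base-case $\mat{P}$; it collapses into the paper's simpler scheme. Finally, your "WLOG $N_1$ even" reduction fails as stated: after deleting row $N_1$, the Jacobian columns $\partial/\partial\mat{U}_1(N_1,j)$ restrict to zero columns, so the row-restricted matrix can never have full column rank; the parity device is only needed (and only used by the paper) inside the $m=3$ base case, where it is built into the definition of $r_3$.

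For contrast, the paper's inductive step avoids your obstacle entirely. It sets $\mat{U}_1(:,i)=\mat{e}_j$ for $i$ in group $j$ (single rows, $N_1$ groups of size $L=\min\{r_{m-1},\lfloor\Pi_{i=2}^m N_i/(N_1+\cdots+N_m-m+1)\rfloor\}$), which makes the Jacobian block-diagonal with one block per slice. Within slice $j$, the $\tmat{I}_{N_k}$-columns of the in-group indices together with the in-group Khatri--Rao columns span exactly the column space of the order-$(m-1)$ Jacobian at rank $L\le r_{m-1}$, which has full column rank by the induction hypothesis --- no orthonormal bases, no padding matrices, no cascade. The remaining $r-L$ out-of-group Khatri--Rao columns are rank-one vectors, and a dimension count shows the complement has dimension at least $r-L$, so a generic choice completes each block to full column rank. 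In other words, the explicit projection argument is only ever needed once, at $m=3$; your plan re-derives it at every order and leaves that re-derivation undone.
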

\begin{proof}
    We will prove the result by induction on the order $m$. Proposition \ref{prop:convex3} proves the conclusion for $m=3$. Suppose that the result holds for the order $m-1$, then we show it also holds for $m$. Similar to the proof of Theorem \ref{prop:convex3}, it suffices to prove there exists some $\tbTheta$ such that the Jacobian is has full column rank \cite{cox2013ideals}.
    
    Denote $L:= \min \{r_{m-1},\lfloor \frac{n_2n_3\cdots N_m}{N_1+\cdots+N_m-m+1}\rfloor \}$ and $\mat{Q}_1=\odot_{i=2}^{m} [\mat{1}^T;\tmat{U}_i(:,1:L)]$,
    \[
        \mat{Q}_k = \left[\otimes_{i=2}^{k-1} [\mat{1}^T;\tmat{U}_i(:,j)] \otimes \tmat{I}_{n_k}     \otimes_{i=k+1}^{m} [\mat{1}^T;\tmat{U}_i(:,j)]\right]_{j=1}^L,\, k=2,\ldots,m.
    \]
    Let $\mat{U}_1(:,i) = \mat{e}_j $ for $(j-1)L+1\le i\le jL $, then (after omitting zero columns) it holds that
    \[
        \mat{J}_{\tmat{h}}(\tbTheta)(1:\Pi_{i=2}^m N_i,:)= \left [\odot_{i=2}^{m} [\mat{1}^T;\tmat{U}_i(:,L+1:r)],\, \mat{Q}_1,\ldots,\mat{Q}_m\right ].
    \]
    Let $\mat{S}:=\left[\mat{Q}_1,\ldots,\mat{Q}_m\right]$. The matrix $\mat{S}$ has the same column space as $\mat{J}_{\tmat{h}}([\mat{1}^T;\tmat{U}_2(:,1:L)],\ldots,\tmat{U}_m(:,1:L))$.  By the induction assumption, we know $\mat{S}$ generically has full column rank since $L\le r_{m-1}$. The number of columns of $\mat{S}$ is $c=(N_2+\cdots+N_m-m+2)L$, then 
    \begin{eqnarray*}
        \Pi_{i=2}^m N_i - c &\ge& \Pi_{i=2}^m N_i - (N_2+\cdots+N_m-m+2)\lfloor \frac{\Pi_{i=2}^m N_i}{N_1+\cdots+N_m-m+1}\rfloor \\
        &\ge &\Pi_{i=2}^m N_i -\Pi_{i=2}^m N_i \frac{N_2+\cdots+N_m-m+2}{N_1+\cdots+N_m-m+1} \\
        &=&\Pi_{i=2}^m N_i(1-\frac{N_2+\cdots+N_m-m+2}{N_1+\cdots+N_m-m+1}) \\
        &=&\Pi_{i=2}^m N_i \frac{N_1-1}{N_1+\cdots+N_m-m+1} \\
        &\ge& (N_1-1) \lfloor \frac{\Pi_{i=2}^m N_i}{N_1+\cdots+N_m-m+1}\rfloor \\
        &=& r-L.
    \end{eqnarray*}
    Thus, we could choose columns $\{\tmat{U}_i(:,L+1:r)\}_{i=1}^m $ such that $\mat{J}_{\tmat{h}}(\tbTheta)(1:\Pi_{i=2}^m N_i,:)$ is linearly independent. It proves that $\mat{J}_{\tmat{h}}(\tbTheta)(1:\Pi_{i=2}^m N_i,:)$ has full column rank generically. The same proof can be applied to $\mat{J}_{\tmat{h}}(\tbTheta)((j-1)\Pi_{i=2}^m N_i+1:j\Pi_{i=2}^m N_i,:),j=1,\ldots,N_1$. Therefore, $\mat{J}_{\tmat{h}}(\tbTheta)$ has full column rank for generic $\tbTheta=(\mat{U}_1,\tmat{U}_2,\ldots,\tmat{U}_m)$.
    
\end{proof}

Now, we are ready to prove Theorem \ref{thm:convex}.

\begin{proof}
    Under the assumption of Theorem \ref{thm:convex}, it holds that $\tilde{f}(\tbTheta^*)=0$. Therefore, Lemma \ref{lemma:pdH} and Theorem \ref{thm:lidJ} imply that the Hessian $\nabla^2 \tilde{f}(\tbTheta^*)$ is positive definite for generic $\tbTheta^*$.
\end{proof}

\bibliographystyle{siamplain}
\bibliography{references.bib}


\end{document}